
%
%
%

\documentclass{amsart}
\usepackage{amsmath,amssymb,amscd,latexsym,verbatim}


\usepackage{amsmath,amssymb,amscd,latexsym}
\usepackage{enumerate}
\usepackage[all]{xy}
\xyoption{curve}
\SelectTips{eu}{}
\CompileMatrices
 


\newcommand{\HS}{\operatorname{HS}}

\newcommand{\GL}{\operatorname{GL}}

\newcommand{\Ker}{\operatorname{Ker}}

\newcommand{\ini}{\operatorname{in}}
\newcommand{\gin}{\operatorname{gin}}
\newcommand{\Min}{\operatorname{Min}}
\newcommand{\Ass}{\operatorname{Ass}}
\newcommand{\length}{\operatorname{length}}

\newcommand{\NN}{{\mathbb N}}
\newcommand{\QQ}{{\mathbb Q}}

\newcommand{\ZZ}{{\mathbb Z}}

\newcommand{\KK}{{\mathcal K}}
\newcommand{\CC}{{\mathcal C}}
\newcommand{\GG}{{\mathcal G}}


\theoremstyle{plain}

\newtheorem{theorem}{Theorem}[section]
\newtheorem{proposition}[theorem]{Proposition}
\newtheorem{lemma}[theorem]{Lemma}

\newtheorem{corollary}[theorem]{Corollary}

\theoremstyle{definition}

\newtheorem{definition}[theorem]{Definition}

\newtheorem{remark}[theorem]{Remark}

\newtheorem{chunk}[theorem]{}

\theoremstyle{remark}

\numberwithin{equation}{theorem}

\begin{document}

\title[Universal Gr\"obner bases for maximal minors]
{Universal Gr\"obner bases for maximal minors}
\author[A.~Conca]{Aldo~Conca}
\address{Dipartimento di Matematica, 
Universit\`a di Genova, Via Dodecaneso 35, 
I-16146 Genova, Italy}
\email{conca@dima.unige.it}
\author[E.~De Negri]{Emanuela De Negri}
\address{Dipartimento di Matematica, 
Universit\`a di Genova, Via Dodecaneso 35, 
I-16146 Genova, Italy}
\email{denegri@dima.unige.it}
\author[E. Gorla]{Elisa Gorla}
\address{Institut de Math\'ematiques, 
Universit\'e de Neuch\^atel, Rue Emile-Argand 11, 
CH-2000 Neuch\^atel, Switzerland} 
\email{elisa.gorla@unine.ch}
\date{\today}
\thanks{The first two authors were partially supported by the Italian
  Ministry of Education, University and Research through the PRIN
  2010-11  ``Geometria delle Variet\`a Algebriche". The third author
  was partially supported by the Swiss National Science Foundation
  under grant no. PP00P2\_123393.} 

\keywords{Determinantal ideals, Gr\"obner bases, matroids.}

\subjclass[2010]{Primary 13C40, 14M12 Secondary 13P10, 05B35}

 
%
%
%

\maketitle

\begin{abstract}
Bernstein, Sturmfels and Zelevinsky proved in 1993 that the maximal
minors of a matrix of variables form a universal Gr\"obner basis.  
We present a very short proof of this result, along with broad
generalization to matrices with multi homogeneous structures.  
Our main tool is a rigidity statement for radical  Borel  fixed ideals
in multigraded polynomial rings.  
\end{abstract}

\section*{Introduction} 

A set $G$  of polynomials  in a polynomial ring $S$ over a field is
said to be  a universal Gr\"obner basis if $G$  is a Gr\"obner basis
with respect to every term order on $S$. Twenty years ago Bernstein,
Sturmfels and Zelevinsky proved in  \cite{BZ,SZ} that the set of the
maximal minors of an $m\times n$ matrix  of variables $X$ is a
universal Gr\"obner basis. Indeed, in \cite{SZ} the assertion is
proved for certain values of $m,n$ and the general problem is reduced
to a combinatorial statement that it is then proved in \cite{BZ}.
Kalinin  gave in \cite{K} a different proof of this result.  Boocher
proved in \cite{B} that any initial ideal  of the ideal $I_m(X)$ of
maximal minors of $X$ has a linear resolution (or, equivalently in
this case, defines a Cohen-Macaulay ring).   

The goal of this paper is twofold. First, we give a quick proof of the
results mentioned above. Our proof is based on a specialization
argument, see Section \ref{origUGB}. Second, we show that similar
statements hold in a more general setting, for matrices of linear
forms satisfying certain homogeneity conditions. More precisely, in
Section \ref{colgrad} we show that the set of the maximal minors of an
$m\times n$ matrix $L=(L_{ij})$ of linear forms is a universal
Gr\"obner basis, provided that $L$ is column-graded. By this we mean
that the entries $L_{ij}$ belong to a polynomial ring with a standard
$\ZZ^n$-graded structure, and that $\deg L_{ij}=e_j\in \ZZ^n$. Under
the same assumption we show that every initial ideal of $I_m(L)$ has a
linear resolution. Furthermore the projective dimension of $I_m(L)$
and of its initial ideals is $n-m$, unless $I_m(L)=0$ or a column of
$L$ is identically $0$ (notice that, under these assumptions, the
codimension of $I_m(L)$ can be smaller than $n-m+1$).    

If instead $L$ is row-graded, i.e. $\deg L_{ij}=e_i\in \ZZ^m$, then we
prove in Section \ref{rowgrad} that $I_m(L)$ has a universal Gr\"obner
basis of elements of degree $m$ and that every initial ideal of
$I_m(L)$ has a linear resolution, provided that $I_m(L)$ has the
expected codimension. Notice that in the row-graded case the maximal
minors do not form a universal Gr\"obner basis in general (since every
maximal minor might have the same initial term).   

The proofs of the statements in Sections \ref{colgrad} and
\ref{rowgrad} are based on a rigidity property of radical Borel fixed
ideals in a multigraded setting. This property has been observed by
Cartwright and Sturmfels  in \cite{CS} and  by Aholt, Thomas,  and
Sturmfels in \cite{ATS}, in special cases.  In a polynomial ring with a
standard $\ZZ^m$-grading, one can take generic initial ideals with
respect to the the product of general linear groups preserving the
grading. Such generic initial ideals are Borel fixed. The main theorem
of Section \ref{radBor} asserts that if two Borel fixed ideals $I,J$
have the same Hilbert series and $I$ is radical, then $I=J$. This is
the rigidity property that we referred to, and which has very strong
consequences. For instance if $I$ is  Cohen-Macaulay, radical and
Borel fixed, then all the multihomogeneous ideals with the same
multigraded Hilbert series are Cohen-Macaulay and radical as well.

Extensive computations performed with CoCoA \cite{Cocoa}  led to the
discovery of the results and examples presented in this paper. 
We thank Christian Krattenthaler for suggesting the elegant proof of
formula \eqref{rg8}. 
This work was done while the authors were at MSRI for the 2012-13
special year in commutative algebra. We thank the organizers and the
MSRI staff members for the invitation and for the warm hospitality.

\section{A simple proof of the universal GB theorem}\label{origUGB}

Let $K$ be a field, $S=K[x_{ij} : 1\leq i \leq m,  \ 1\leq j \leq
n]$. Let $X=(x_{ij})$ be an $m\times n$ matrix of indeterminates, and
let $I_m(X)$ be the ideal generated by the maximal minors of $X$. The
goal of this section is giving a quick proof of the following result
of  Bernstein, Sturmfels, Zelevinsky \cite{BZ,SZ}, and  Boocher
\cite{B}:  

\begin{theorem}\label{BSZB}   The set of maximal minors of $X$ is a
  universal Gr\"obner basis of $I_m(X)$, i.e., a Gr\"obner basis of
  $I_m(X)$ with respect to all the term orders. Furthermore every
  initial ideal of $I_m(X)$  has the same Betti numbers as $I_m(X)$.  
\end{theorem}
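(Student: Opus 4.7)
Fix a term order $\tau$ on $S$ and let $J_\tau\subseteq S$ be the monomial ideal generated by the initial terms $\ini_\tau([c_1,\ldots,c_m])$ of the $\binom{n}{m}$ maximal minors of $X$. Since $J_\tau\subseteq \ini_\tau(I_m(X))$ and passing to an initial ideal preserves the Hilbert function, the Gr\"obner basis assertion is equivalent to the identity $\HS(S/J_\tau)=\HS(S/I_m(X))$. Note first that each minor is an alternating sum of $m!$ distinct squarefree monomials (one per permutation), so no cancellation can occur in the leading term: $\ini_\tau([c_1,\ldots,c_m])$ is a single squarefree monomial $u_C$ of degree $m$ that uses exactly one variable from each row of $X$ and one from each column indexed by $C$. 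Thus $J_\tau$ is a squarefree monomial ideal with exactly $\binom{n}{m}$ minimal generators in degree $m$, matching both the number and the degree of the generators of $I_m(X)$ itself.

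My plan is then to prove the Hilbert identity $\HS(S/J_\tau)=\HS(S/I_m(X))$ by a specialization argument. I would benchmark $J_\tau$ against the ``diagonal'' initial ideal $J_0$ coming from the row-wise lexicographic term order, for which both the Hilbert series and the linear Eagon--Northcott free resolution of $S/J_0$ are classical. The reduction then requires, for each $\tau$, a flat family with special fibre $S/J_0$ and generic fibre $S/J_\tau$---or, equivalently, an explicit bijection of $K$-bases of the two quotients. Since any term order can be represented by a weight vector whose initial ideal agrees with $\ini_\tau$, one can attempt to interpolate between weight vectors and exploit upper-semicontinuity of the Hilbert function, bounded below by the known value for $J_0$.

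Once the Hilbert identity is established, the Betti-number statement follows almost formally. Indeed $I_m(X)$ has a linear Eagon--Northcott resolution, so its graded Betti numbers are determined by its Hilbert series; a squarefree monomial ideal generated in a single degree with the same Hilbert series has no alternative but to admit a linear free resolution with identical Betti numbers, and this transfers the full Betti table from $I_m(X)$ to $\ini_\tau(I_m(X))$.

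The main obstacle is precisely the specialization step in the second paragraph. The matching $\sigma_C\colon [m]\hookrightarrow C$ that determines $u_C$ genuinely depends on $\tau$, so the combinatorics of $J_\tau$ varies with $\tau$; yet its Hilbert series must be constant. Controlling how the $u_C$'s change as $\tau$ varies---and showing that only Hilbert-series-preserving changes occur---is the heart of the argument and the place where the quickness of the proof stands or falls. A natural workaround, which I expect the authors to exploit, is to treat $\tau$ as a generic weight perturbation of a favourable term order and to identify a rigidity mechanism (for instance, that all the relevant squarefree monomial ideals share a matroid-type Stanley--Reisner structure) forcing the Hilbert series to be invariant.
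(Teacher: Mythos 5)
There is a genuine gap, and it is exactly where you flag it. Your plan reduces the theorem to the Hilbert-series identity $\HS(S/J_\tau)=\HS(S/I_m(X))$, which is correct, but the mechanism you propose for proving it --- interpolating between weight vectors, building a flat family with special fibre $S/J_0$ and generic fibre $S/J_\tau$, and invoking upper-semicontinuity --- is only a plan, not a proof. Weight interpolation and Gr\"obner degenerations give you semicontinuity of Hilbert \emph{functions} in one direction (passing to an initial ideal can only grow the ideal), but nothing in your sketch forces the opposite inequality, i.e.\ that $J_\tau$ is not strictly smaller than $\ini_\tau(I_m(X))$. That is the heart of what needs to be shown, and the ``rigidity mechanism'' you anticipate is left as a placeholder. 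In addition, the claim in your last paragraph --- that a squarefree monomial ideal generated in a single degree with the same Hilbert series as $I_m(X)$ ``has no alternative but to admit a linear free resolution with identical Betti numbers'' --- is false as a general principle: equality of Hilbert series does not force a resolution to be linear. (If $J$ \emph{does} have a linear resolution, then its Betti numbers are determined by the Hilbert series; but linearity itself must be established, and in this context it is Boocher's theorem, not a formal consequence.)

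The paper's argument supplies precisely the concrete mechanism your sketch lacks, and it is worth comparing. Instead of varying the term order, one fixes a single $K$-algebra specialization $\Phi\colon S\to K[y_1,\dots,y_n]$ with $\Phi(x_{ij})=a_{ij}y_j$ for a generic scalar matrix $A=(a_{ij})$. Under $\Phi$, the minor $[c_1,\dots,c_m]_X$ maps to $[c_1,\dots,c_m]_A\,y_{c_1}\cdots y_{c_m}$, and so does its leading term (up to a nonzero scalar), so $\Phi(I_m(X))$ and $\Phi(D)$ (where $D$ is your $J_\tau$) are \emph{both} the ideal of all squarefree degree-$m$ monomials in $y_1,\dots,y_n$. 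The squarefree ideal has codimension $n-m+1$ and is resolved by Eagon--Northcott, hence $\Ker\Phi$ (a space of $n(m-1)$ linear forms) is a regular sequence modulo $I_m(X)$. A short Hilbert-series lemma (the ``Hilfssatz'' in the paper) then compares $M=S/I_m(X)$, $N=S/\ini_\tau(I_m(X))$, and $T=S/D$: since $M$ and $N$ share a Hilbert series, $M/JM$ and $T/JT$ share a Hilbert series, and $J=\Ker\Phi$ is $M$-regular, the surjection $T\twoheadrightarrow N$ must be an isomorphism and $J$ is also $T$-regular. This simultaneously proves $D=\ini_\tau(I_m(X))$ and transfers the Betti numbers (through the common specialization $K[y]/I_m(Y)$), without ever assuming in advance that the initial ideal is linearly resolved. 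So the missing step in your proposal is replaced by a fixed, explicit specialization plus a regular-sequence comparison, rather than a family of degenerations.
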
 

We need the following ``Hilfssatz":

\begin{lemma}\label{help}  
Let $R$ be a standard graded $K$-algebra,  let $M,N,T$ be finitely
generated graded modules $R$-modules, and $J=(y_1,\dots,y_s) \subset
R$ be a homogeneous ideal. Suppose that:  
\begin{itemize}
\item[(1)]  there exists  a surjective graded $R$-homorphism $f:T\to N$. 
\item[(2)] $M$ and $N$ have the same Hilbert series, 
\item[(3)] $M/JM$ and $T/JT$ have the same Hilbert series, 
\item[(4)] $y_1,\dots,y_s$ is $M$-regular sequence. 
\end{itemize} 
Then $f$ is an isomorphism  and $y_1,\dots,y_s$ is a $T$-regular sequence. 
\end{lemma}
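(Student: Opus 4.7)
The plan is to show $\ker f = 0$ (hence $f$ is an isomorphism) and that $\mathbf{y}$ is $T$-regular. Set $K = \ker f$; then $0 \to K \to T \to N \to 0$ is exact, and combining with (1)--(2) gives $\HS(K) = \HS(T) - \HS(M)$, so the first goal reduces to showing $\HS(K) = 0$. The main tool is the Euler characteristic identity for the Koszul complex: for any finitely generated graded $R$-module $P$,
\[
\sum_{i \geq 0}(-1)^i \HS(H_i(\mathbf{y}, P)) = \HS(P)\prod_{j=1}^s(1 - t^{d_j}), \quad d_j = \deg y_j.
\]
By (4), $H_i(\mathbf{y}, M) = 0$ for $i \geq 1$, so $\HS(M/JM) = \HS(M)\prod_j(1-t^{d_j})$; by (3) this also equals $\HS(T/JT)$. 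Applying the Euler characteristic identity to $T$ and rearranging yields the key relation
\begin{equation*}
\HS(K) \prod_j(1-t^{d_j}) = \sum_{i \geq 1}(-1)^i \HS(H_i(\mathbf{y}, T)).
\end{equation*}

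For the base case $s = 1$, the identity becomes $\HS(K)(1-t^{d_1}) = -t^{d_1}\HS(0 :_T y_1)$, whose right side has all coefficients non-positive. If $\HS(K) \neq 0$, its lowest non-zero term has strictly positive coefficient $c$, which also appears on the left side at that degree; comparing gives $c \leq 0$, a contradiction. Hence $\HS(K) = 0$ and $(0 :_T y_1) = 0$, giving both conclusions.

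For the inductive step $s \geq 2$, I would reduce to the $s - 1$ case by handling $y_s$ separately. Set $J' = (y_1, \ldots, y_{s-1})$. Expanding $T/JT = (T/J'T)/y_s(T/J'T)$ and using that $y_s$ is $M/J'M$-regular (permutability of the $M$-regular sequence in the graded setting), hypothesis (3) yields
\[
(1 - t^{d_s})\bigl[\HS(T/J'T) - \HS(M/J'M)\bigr] = -t^{d_s}\HS(0 :_{T/J'T} y_s).
\]
Once one also has the coefficient-wise inequality $\HS(T/J'T) \geq \HS(M/J'M)$, both sides must vanish, giving $\HS(T/J'T) = \HS(M/J'M)$ and $(0 :_{T/J'T} y_s) = 0$. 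Applying the inductive hypothesis to $(y_1, \ldots, y_{s-1})$ then gives $f$ iso and $(y_1, \ldots, y_{s-1})$ is $T$-regular; combined with $y_s$ being $T/J'T$-regular, the full sequence $\mathbf{y}$ is $T$-regular.

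The main obstacle is the companion inequality $\HS(T/J'T) \geq \HS(M/J'M)$. Using the surjectivity of $f$, this reduces to the monotonicity claim: if $\HS(M) = \HS(N)$ and $\mathbf{y}'$ is $M$-regular, then $\HS(N/J'N) \geq \HS(M/J'M)$ coefficient-wise. The $s' = 1$ case follows from the elementary identity $\HS(N/yN) - \HS(M/yM) = t^d\HS(0 :_N y) \geq 0$; the general case requires a separate, careful induction and is the most technically delicate part of the proof.
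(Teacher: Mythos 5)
Your strategy is genuinely different from the paper's, and it has a real gap in exactly the place you point to.  The paper does not peel off the elements $y_1,\dots,y_s$ one at a time: it introduces the shifted annihilators $K_j$ ($j=1,\dots,s$) and the kernel $K_0=\Ker f$ all at once, unwinds the four-term exact sequences into the single identity
\[
\HS(T/JT)=\HS(M/JM)+\sum_{j=0}^{s}g_{s-j}(x)\,\HS(K_j),\qquad g_i(x)=\prod_{k=1}^{i}(1-x^{d_k}),
\]
and then uses hypothesis (3) to force $\sum_{j}g_{s-j}(x)\HS(K_j)=0$.  The positivity argument is applied \emph{once}, to the lowest degree at which any $\HS(K_j)$ is nonzero: at that degree only the constant term $1$ of each $g_{s-j}$ contributes, so the coefficient of the sum there is $\sum_j[\HS(K_j)]_a\geq 0$, with strict inequality unless every $K_j$ vanishes.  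This bundling is what makes the negative coefficients of the $g_i$'s harmless.

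Your proposal instead inducts on $s$, and the inductive step needs the coefficientwise inequality $\HS(T/J'T)\geq \HS(M/J'M)$, which you reduce (correctly, via surjectivity of $f$) to the monotonicity claim: $\HS(M)=\HS(N)$ and $\mathbf{y}'$ $M$-regular imply $\HS(N/J'N)\geq\HS(M/J'M)$.  You flag this as ``the most technically delicate part'' and do not prove it; that is the gap, and it is not a small one.  Note that the naive induction on $s'$ for the monotonicity claim does not close: after one step you only know $\HS(N/J''N)\geq\HS(M/J''M)$, not equality, and the $s'=1$ computation
\[
\HS(N'/yN')-\HS(M'/yM')=(1-t^{d})\bigl[\HS(N')-\HS(M')\bigr]+t^{d}\HS(0:_{N'}y)
\]
no longer has an obvious sign when $\HS(N')-\HS(M')$ is merely $\geq 0$, because $(1-t^d)$ carries a negative coefficient.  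Your $s'=1$ case used equality in an essential way, and that equality is precisely what is lost after the first peel.  In other words, you cannot use hypothesis (3) incrementally; the paper's proof works because it uses (3) globally, producing one identity in which all the error terms $K_0,\dots,K_s$ appear with their lowest-degree coefficients weighted by $+1$.  If you want to salvage your induction, you would have to strengthen the monotonicity claim to something that \emph{is} stable under one peel, and doing that carefully essentially reproduces the paper's unwinding.  As it stands, the proposal establishes the $s=1$ case and the reduction structure but leaves the crux unproved.
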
 
\begin{proof}  
We denote by $\HS(M,x)\in \QQ[|x|][x^{-1}]$ the Hilbert series of a
finitely generated graded $R$-module $M$.   
For $i=1,\dots,s$ set $J_i=(y_1,\dots,y_i)$,  $T_i=T/J_iT$,
$d_i=\deg(y_i)$ and $g_i(x)=\prod _{j=1}^i (1-x^{d_j})\in \QQ[x]$.
Furthermore set $T_0=T$ and for $i=0,\dots, s-1$ denote by  $K_{i+1}$
the submodule $\{ m\in T_i : y_{i+1}m=0\}$ of $T_i$ shifted by
$-d_{i+1}$. Finally set $K_0=\Ker f$.  For $i\geq 0$ we have an exact
complex:  
$$0\to K_{i+1} \to T_i(-d_{i+1})\to T_i \to T_{i+1} \to 0$$
where the middle map is multiplication by $y_{i+1}$. Additivity of
dimensions on exact sequences of vector spaces yields:  
$$\HS(T_{i+1},x)=(1-x^{d_{i+1}})\HS(T_i,x)+\HS(K_{i+1},x)$$
and hence 
$$\HS(T_{i+1},x)=g_{i+1}(x)\HS(T,x)+\sum_{j=1}^{i+1} g_{i+1-j}(x) \HS(K_j,x).$$
Since 
$$\HS(T,x)=\HS(N,x)+\HS(K_0,x)$$  we may write
$$\HS(T_{i+1},x)=g_{i+1}(x)\HS(N,x)+\sum_{j=0}^{i+1} g_{i+1-j}(x) \HS(K_j,x)$$
and in particular, for $i+1=s$, 
$$\HS(T/JT,x)=g_s(x)\HS(N,x)+\sum_{j=0}^s g_{s-j}(x) \HS(K_j,x).$$
Since $\HS(N,x)=\HS(M,x)$ and $y_1,\dots,y_s$ is an $M$-regular sequence we obtain 
$$\HS(T/JT,x)=\HS(M/JM,x)+\sum_{j=0}^s g_{s-j}(x) \HS(K_j,x).$$
Hence, by assumption (3), we have 
$$0=\sum_{j=0}^s g_{s-j}(x) \HS(K_j,x).$$
Since $\HS(K_j,x)$ are powers series with non-negative terms and
$g_i(x)$ are polynomials with least degree term coefficient equal to
$1$, we conclude that $\HS(K_j,x)=0$ for $j=0,\dots,s$. Hence $K_j=0$
for $j=0,\dots,s$.  
\end{proof}

\begin{proof}[Proof of Theorem \ref{BSZB}] 
We may assume without loss of generality that $K$ is infinite. 
Let $A=(a_{ij})$ be an $m\times n$ matrix with entries in $K^*$, such
that all its $m$-minors are non-zero. It exists because  $K$ is
infinite.  Consider the $K$-algebra map $$\Phi:S=K[x_{ij} : 1\leq i
\leq m,\ 1\leq j \leq n]\to K[y_1,\dots,y_n]$$ induced by
$\Phi(x_{ij})=a_{ij}y_j$ for every  $i,j$. By construction the kernel
of $\Phi$  is generated by $n(m-1)$ linear forms. Let
$Y=\Phi(X)=(a_{ij}y_j)$. Denote by $[c_1,\dots,c_m]_W$ the minor with
column indices $c_1,\dots,c_m$ of an $m\times n$ matrix $W$. By
construction  
$$\Phi([c_1,\dots,c_m]_X)=[c_1,\dots,c_m]_Ay_{c_1}\cdots y_{c_m}.$$
Hence, by our assumption on $A$, we have that 
$$\Phi(I_m(X))=I_m(Y)=( y_{c_1}\cdots y_{c_m} : 1\leq c_1<\dots<c_m\leq n )$$ 
i.e., $I_m(Y)$ is generated by all the square-free monomials in
$y_1,\dots, y_n$ of total degree $m$. In particular it has codimension
$n-m+1$. It follows that  $I_m(Y)$ is resolved by the Eagon-Northcott
complex, hence $\Ker\Phi$ is generated by a regular sequence on
$S/I_m(X)$.  
Now let $\prec$ be any term order on $S$ and let $D$ be the ideal
generated by the leading terms of the maximal minors of $X$ with
respect to $\prec$. We have $D\subseteq \ini_\prec(I_m(X))$ and  
$$\Phi(\ini_\prec( [c_1,\dots, c_m]_X))=\Phi(x_{\sigma_1c_1}\cdots
x_{\sigma_mc_m})=a_{\sigma_1c_1}\cdots a_{\sigma_mc_m} y_{c_1}\cdots
y_{c_m}$$ for some $\sigma\in S_m$. Hence  
$$\Phi(D)=I_m(Y). $$ 
We apply Lemma~\ref{help} to the following data: 
$$M=S/I_m(X), \quad T=S/D,  \quad N=S/\ini_\prec(I_m(X)) \mbox{ and } J=\Ker \Phi$$
 to conclude that $D=\ini_\prec(I_m(X))$, and the Betti numbers of $I_m(X)$ equals those of $D$. 
\end{proof}

Can one generalize Theorem \ref{BSZB} to ideals of maximal minors of
matrices of linear forms? In Sections  \ref{colgrad} and
\ref{rowgrad}  we will give positive answers to the question by
assuming the matrix is  multigraded, either by rows or by columns. In
general however one cannot expect too much, as the following  remark
shows.  

\begin{remark} 
\label{ex23} 
One can consider various properties related to the existence of
Gr\"obner bases  and various families of matrices of linear forms.  
For instance we can look at the following properties for the ideal
$I_m(L)$ of $m$-minors of an $m\times n$ matrix $L$ of linear forms in
a polynomial ring $S$: 
\begin{itemize}
\item[(a)] $I_m(L)$ has a Gr\"obner basis of elements of degree $m$
  with respect to some  term order and possibly after a change  of
  coordinates.  
\item[(b)] $I_m(L)$ has a Gr\"obner basis of elements of degree $m$
  with respect to some  term order  and in the given coordinates.  
\item[(c)]  Property (b) holds and the associated initial ideal has a linear resolution. 
\item[(d)] $I_m(L)$ has a universal Gr\"obner basis of elements of degree $m$. 
\end{itemize}
We consider the following families of matrices of linear forms: 
\begin{itemize}
\item[(1)] No further assumption on $L$ is made.    
\item[(2)] $I_m(L)$ has codimension $n-m+1$.
\item[(3)] The entries of $L$ are linearly independent over the base
  field (i.e., $L$ arises from a matrix of variables by a change of
  coordinates). 
\end{itemize}

What we know (and do not know) is summarized in the following table: 

\begin{table}[htdp]
\begin{center}
\begin{tabular}{|c|c|c|c|c|}
\hline
     & (a) &  (b) & (c) & (d)   \\
\hline
(1) & no & no  & no & no\\ 
\hline
(2) & yes &no & no & no \\ 
\hline
(3)  & yes  & ? & ? & no  \\ 
\hline
\end{tabular}
\end{center}
\label{Who has what}
\end{table} 

There are ideals of $2$-minors of $2\times 4$ matrices of linear forms
that   define non-Koszul ring (see \cite[Remark 3.6]{C}). Hence those
ideals cannot have a single Gr\"obner bases of quadrics (not even
after a change of coordinates). This explains the four ``no" in the
first row of the table.  

Every initial ideal of the ideal of $2$-minors of 
$$\left(\begin{array}{ccc}
x_1+x_2 & x_3    & x_3   \\
   0   &   x_1& x_2
\end{array} 
\right)$$
has a generator in degree $3$ if the characteristic of the base field
is $\neq 2$. The codimension of $I_2(L)$ is $2$. This example explains
the three ``no" in the second row of the table.  
The ``yes" in the second row follows because the generic initial ideal
with respect to the reverse lexicographic order is generated in degree
$m$ under assumption (2).  

Finally, the matrix 
$$\left(\begin{array}{ccc}
x_1 & x_4         & x_3 \\
x_5 &x_1+x_6  & x_2
\end{array} 
\right)$$
belongs to the family (3) and the initial ideal with respect to any
term order satisfying $x_1>x_2>\dots>x_6$ has a generator in degree
$3$.   
This explains the ``no" in the third row. The ``yes" is there because (3) is contained in (2). 
 
It remains open whether the ideal of maximal minors of a matrix in the
family (3) has at least a Gr\"obner basis of elements of  degree $m$
in the given coordinates, and whether the associated initial ideal has
a linear resolution.  
\end{remark}

\section{Radical and Borel fixed ideals} \label{radBor}

The goal of the section is to prove Theorem \ref{onlyone},  a rigidity
result for multigraded Hilbert series associated to radical
multigraded Borel fixed ideals. Special cases of it appeared already
in \cite{CS} and \cite{ATS}. We will introduce the geometric
multidegree, a generalization of the notion of multidegree of Miller
and Sturmfels  \cite[Chap.8]{MS}, that allows us to deal with minimal
components of various codimensions in the case of Borel fixed ideals.  
 
Given $m\in \NN$ and $(n_1,\dots,n_m)\in \NN^m$  let $S$ be the
polynomial ring in the  set of variables $x_{ij}$ with $1\leq i\leq m$
and $1\leq j\leq n_i$ over an infinite field $K$, with grading induced
by $\deg(x_{ij})=e_i\in \ZZ^m$.   
Let $M$ be a finitely generated, $\ZZ^m$-graded $S$-module. 
The multigraded Hilbert series of $M$ is: 
 $$\HS(M,y)=\HS(M,y_1,\dots,y_m)=\sum_{a\in \ZZ^m} (\dim M_a)  y^a\in
 \QQ[|y_1,\dots,y_m|][y_1^{-1},\dots, y_m^{-1}|].$$ 
 The $\KK$-polynomial of $M$ is: 
$${\KK}(M,y)={\KK}(M,y_1,\dots,y_m)=\prod _{i=1}^m  (1-y_i)^{n_i}\HS(M,y).$$
Indeed, 
$${\KK}(M,y)\in \ZZ[y_1,\dots,y_m][y_1^{-1},\dots, y_m^{-1}].$$
 
The group $G=\GL_{n_1}(K)\times \cdots \times \GL_{n_m}(K)$ acts on
$S$ as the  group of $\ZZ^m$-graded $K$-algebras automorphisms.  Let
$B=B_{n_1}(K)\times \cdots  \times B_{n_m}(K)$ be the Borel subgroup
of $G$ consisting of the upper triangular matrices with arbitrary
non-zero diagonal entries. An ideal $I$ is said to be Borel fixed if
$g(I)=I$ for every $g\in B$. Borel fixed ideals are monomial ideals
that can be characterized in a combinatorial way by means of exchange
properties as it is explained in \cite[Thm. 15.23]{E}. Indeed in
\cite[Thm. 15.23]{E}  details are given in the standard graded setting
but, as observed in \cite[Sect.1]{ACD},  the same characterization
holds also in the multigraded setting. Given a term order $\prec$
such that $x_{ik}\prec x_{ij}$ for $j>k$, one can associate a
(multigraded) generic initial ideal $\gin_{\prec}(I)$ to any
$\ZZ^m$-graded ideal of $I$ of $S$. $\gin_{\prec}(I)$ is Borel fixed.

 The prime Borel fixed ideals are easy to describe. Set 
 $$U= \{ (b_1,\dots, b_m) \in \NN^m : b_i\leq  n_i \mbox{ for every } i=1,\dots,m\}.$$
The following assertion follows immediately from the definition. 

 \begin{lemma}\label{easy1}
For every vector $b\in U$  the ideal 
 $$P_b=( x_{ij}  : i=1,\dots,m \mbox{ and } 1\leq j\leq b_i )$$ 
 is prime and Borel fixed, and every prime  Borel fixed ideal is of this form. 
 \end{lemma}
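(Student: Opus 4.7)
The plan is to prove the two assertions separately, and both should fall out of direct manipulation with variables.

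For the first assertion, primality of $P_b$ is immediate: $S/P_b$ is isomorphic to the polynomial ring $K[x_{ij} : b_i < j \leq n_i]$ in the remaining variables, which is a domain. For the Borel-fixed property, take $g \in B$ and note that $g$ acts block-wise on each set of variables $\{x_{i1},\dots,x_{in_i}\}$ by an upper triangular matrix. In the convention used in the paper (where $x_{ik}\prec x_{ij}$ for $k<j$ and $B$ consists of upper triangular matrices), $g\cdot x_{ij}$ lies in the span of $x_{i1},\dots,x_{ij}$, so whenever $j\leq b_i$ the image $g\cdot x_{ij}$ belongs to $P_b$. Thus $g(P_b)\subseteq P_b$, and by invertibility of $g$ equality holds.

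For the converse, let $I$ be a prime Borel fixed ideal. By the result recalled just before the lemma, $I$ is a monomial ideal. A prime monomial ideal must be generated by a set $V$ of variables, because any minimal non-variable generator $m$ would factor as $m_1 m_2$ with $m_1,m_2\notin I$, contradicting primality. It remains to show that $V$ has the shape $\{x_{ij}:1\leq j\leq b_i\}$ for some $b\in U$, which is equivalent to saying that $V$ is \emph{downward closed} in each row with respect to the second index. This is exactly the content of the combinatorial exchange property for Borel fixed ideals from \cite[Thm.~15.23]{E} (in its multigraded form, cf.\ \cite[Sect.~1]{ACD}): applied to the generator $x_{ij}\in I$, it yields $x_{ik}\in I$ for every $k<j$. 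Setting $b_i=\max\{j:x_{ij}\in V\}$ (or $b_i=0$ if no such $j$ exists) then gives $I=P_b$ with $b=(b_1,\dots,b_m)\in U$.

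The only step requiring any attention is the second one, in which one must invoke the exchange characterization of Borel fixedness in the correct direction dictated by the paper's conventions on the term order and the Borel subgroup; modulo matching these conventions, both halves of the lemma are essentially a reading of the definitions.
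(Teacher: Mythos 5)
Your proof is correct and, since the paper only remarks that the lemma ``follows immediately from the definition,'' it essentially spells out that remark. Both halves are fine: $S/P_b$ is a polynomial ring, hence a domain; the Borel subgroup maps $x_{ij}$ into $\mathrm{span}(x_{i1},\dots,x_{ij})$ in the paper's convention (this is exactly the action used in the proof of Lemma~\ref{easy2}), so $P_b$ is $B$-stable; and a prime monomial ideal is variable-generated, with the exchange property forcing down-closure in the column index. One small remark: rather than invoking the exchange characterization from \cite[Thm.~15.23]{E}, you could argue directly from the group action, as the paper itself does in Lemma~\ref{easy2}: if $x_{ij}\in I$ and $g\in B$ sends $x_{ij}\mapsto x_{ij}+x_{ik}$ with $k<j$, then $x_{ik}=g(x_{ij})-x_{ij}\in g(I)-I\subseteq I$. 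This is marginally more elementary and avoids any concern about which form of the (characteristic-dependent) exchange property one needs; but the conclusion you need holds in all characteristics either way, so your version is also correct.
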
 
 
\begin{lemma}\label{easy2} 
The associated prime ideals of a Borel fixed ideal $I$ are Borel fixed. 
\end{lemma}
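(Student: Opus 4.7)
The plan is to exploit the connectedness of the algebraic group $B$. First, $B$ permutes $\Ass(S/I)$: if $P=\ann_S(f+I)$ for some $f\in S$, then for any $g\in B$ one has $g(P)=\ann_S(g(f)+g(I))=\ann_S(g(f)+I)$, using that $g$ fixes $I$; hence $g(P)\in\Ass(S/I)$. Moreover $\Ass(S/I)$ is a \emph{finite} set, since $I$ is monomial (Borel fixed ideals are monomial by the exchange characterization), so every associated prime is generated by a subset of the variables $x_{ij}$, and there are only finitely many such monomial primes to choose from.

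Fix now $P\in\Ass(S/I)$, say $P=(x_{ij}:(i,j)\in\sigma)$ for some subset $\sigma$. The stabilizer $\operatorname{Stab}_B(P)=\{g\in B:g(P)=P\}$ is a closed subgroup of $B$: since each $g\in B$ acts block-diagonally with upper-triangular blocks, $g(x_{ij})$ is a linear combination of the variables $x_{ik}$ with $k\leq j$ in the same row, and the condition $g(x_{ij})\in P$ for every generator $x_{ij}$ of $P$ translates into the vanishing of those coefficients in $g$ that attach to variables $x_{ik}\notin P$. This is a linear (hence Zariski-closed) condition on the matrix entries of $g$.

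To conclude, observe that $B=B_{n_1}(K)\times\cdots\times B_{n_m}(K)$ is irreducible as an algebraic variety. Its decomposition into the preimages of the points of the (finite) orbit $B\cdot P$ under the orbit map $g\mapsto g(P)$ is a partition of $B$ into finitely many cosets of $\operatorname{Stab}_B(P)$, each closed, hence (being the complement of the union of the others) also open. By irreducibility of $B$ there can be only one such piece, so $B\cdot P=\{P\}$. Thus every $g\in B$ fixes $P$, proving that $P$ is Borel fixed.

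The only delicate point is the algebraic-geometric step of the last paragraph: writing the finite-orbit decomposition as a clopen partition and invoking irreducibility of $B$ to force a single orbit. Everything else -- the $B$-action on $\Ass(S/I)$, finiteness of $\Ass(S/I)$, and closedness of $\operatorname{Stab}_B(P)$ -- is formal.
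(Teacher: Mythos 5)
Your proof is correct, and it follows a genuinely different route from the paper's. The paper argues by direct computation: writing $P = I:f$ for a monomial $f$, taking $\alpha$ to be the exponent of $x_{ij}$ in $f$, applying $g\in B$ with $g(x_{ij})=x_{ij}+x_{ik}$, and observing that the monomial $x_{ik}^{\alpha+1}f/x_{ij}^{\alpha}$ occurs in $g(x_{ij}f)\in I$ with coefficient $1$; since $I$ is monomial, this monomial is in $I$, whence $x_{ik}^{\alpha+1}\in I:f=P$ and so $x_{ik}\in P$. You instead invoke the general principle that a connected (irreducible) algebraic group acting on a finite set acts trivially, via the clopen decomposition of $B$ into cosets of the (closed) stabilizer. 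Your approach is cleaner and more conceptual, and it transfers verbatim to any Borel-fixed monomial ideal under a connected group action; the paper's computation is fully self-contained and elementary (and makes no appeal to connectedness or closedness). One minor point to tidy in your write-up: over an infinite field $K$ that is not algebraically closed, irreducibility of $B_{n_1}(K)\times\cdots\times B_{n_m}(K)$ as a variety deserves a word — you can either pass to $\bar{K}$, or replace the global irreducibility argument by observing that $B$ is generated by one-parameter subgroups $\{g_t\}_{t\in K}$ (or $t\in K^*$), and for each of these the finitely many fibers of $t\mapsto g_t(P)$ are Zariski-closed subsets of $K$ partitioning an infinite irreducible curve, hence all but one are empty; taking $t$ at the identity forces $g_t(P)=P$ for all $t$.
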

\begin{proof}  
Let $P$ be an associated prime to $S/I$.  Clearly $P$ is monomial
(i.e., generated by variables) because $I$ is monomial. We have to
prove that if $x_{ij}\in P$ then also $x_{ik}\in P$ for all $k<j$. We
may write $P=I:f$ for some monomial $f$. Let $\alpha$ be the exponent
of $x_{ij}$ in $f$. Consider $g\in B$ such that
$g(x_{ij})=x_{ij}+x_{ik}$ and fixes all the other variables. Then
$g(x_{ij}f)\in I$ because $x_{ij}f\in I$. The monomial
$x_{ik}^{\alpha+1}f/x_{ij}^{\alpha}$ appears with nonzero coefficient
in $g(x_{ij}f)$. Hence  $x_{ik}^{\alpha+1}f/x_{ij}^{\alpha}\in I$ and
$x_{ik}^{\alpha+1}f\in I$.  In other words, $x_{ik}^{\alpha+1}\in
I:f=P$ and hence  $x_{ik}\in P$.  
\end{proof} 
  
\begin{lemma}\label{easy3}  
Let $I$ be a radical and Borel fixed ideal.  Then every minimal generator of $I$ has multidegree bounded above by $ (1,1,\dots,1)\in \ZZ^m$.  
\end{lemma}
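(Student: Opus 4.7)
The plan is to exploit two features in tandem: since $I$ is radical and monomial, every minimal generator is squarefree; and since $I$ is Borel fixed, there is an exchange property available within each row. Let $u=\prod x_{ij}^{a_{ij}}$ be a minimal generator of $I$. Squarefreeness (a radical monomial ideal is generated by squarefree monomials) forces $a_{ij}\in\{0,1\}$, so the $i$-th component of the multidegree of $u$ is simply the number of $j$'s with $x_{ij}\mid u$. It therefore suffices to rule out the possibility that $x_{ij_1}$ and $x_{ij_2}$ both divide $u$ for some fixed row index $i$ and distinct column indices $j_1<j_2$.

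Assume for contradiction this happens and write $u=x_{ij_1}x_{ij_2}v$, with $v$ squarefree and involving neither $x_{ij_1}$ nor $x_{ij_2}$. The multigraded Borel exchange property (valid here by \cite[Sect.1]{ACD}, as recalled just before Lemma~\ref{easy1}) applied to $x_{ij_2}\mid u$ with replacement index $j_1<j_2$ inside row $i$ yields $x_{ij_1}\cdot(u/x_{ij_2})=x_{ij_1}^2v\in I$. Invoking radicality once more, some squarefree generator of $I$ must divide $x_{ij_1}^2v$; any such squarefree divisor has $x_{ij_1}$-exponent at most $1$, so it already divides the squarefree monomial $x_{ij_1}v$. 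Consequently $u/x_{ij_2}=x_{ij_1}v\in I$, which is a proper divisor of $u$ and contradicts the minimality of $u$.

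The step requiring the most care is invoking the exchange property in the correct form: the classical statement in \cite[Thm.~15.23]{E} is phrased in the standard graded setting, and one must use the version adapted to the $\ZZ^m$-grading in which the Borel subgroup acts block-diagonally, so that the swap $x_{ij_2}\to x_{ij_1}$ remains internal to row $i$. Once that is in hand, the rest is formal: squarefreeness, a single row exchange, and one further application of radicality close the argument.
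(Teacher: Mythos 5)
Your proof is correct and follows essentially the same strategy as the paper's: use the Borel exchange property to move degree within a row onto a single variable, then invoke radicality to lower the exponent and produce a proper divisor, contradicting minimality. The only organizational difference is that you first reduce to squarefree generators and then treat one pairwise exchange, whereas the paper's proof concentrates the entire row-$i$ degree $a_i$ onto the smallest variable in one step and applies radicality once; both are valid and equivalent in substance.
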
 
\begin{proof} 
Consider a generator $f$ of $I$ of degree $(a_1,\dots,a_m)$.  We may
write $f=ug$ with $u$ a monomial of degree $a_1e_1$. Since $I$ is
Borel fixed we have $x_{1j}^{a_1}g\in I$, where $j=\min\{ k  :
x_{1k}|u\}$. Since $I$ is radical we have  $x_{1j}g\in I$, and
$x_{1j}g$ is a proper divisor of $f$ unless $a_1=1$.  
 \end{proof} 
 
\begin{lemma}\label{easy4}  
Let  $I$  be a radical Borel fixed ideal and let $\{P_{b_1}, \dots,
P_{b_c}\}$, with $b_1,\dots,b_c\in U$, be the  minimal primes of
$I$. Then $I$ is the Alexander dual of the polarization of  
$$J=(\prod_{b_{ij}>0}  x_j^{b_{ij}} : i=1,\dots, c)\subset K[x_1,\dots,x_m].$$
In particular, if all the generators of $I$ have the same multidegree,
then $I$ has a linear resolution.  
\end{lemma}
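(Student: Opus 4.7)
My plan is to split the proof into the Alexander-dual identification (the principal claim) and the linear-resolution consequence, the latter via the Eagon-Reiner theorem.

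For the Alexander-dual identification, I would first invoke Lemma \ref{easy3}: every minimal generator of $I$ has multidegree bounded above by $(1,\dots,1)$, and since each $x_{ij}$ has multidegree $e_i$, this forces every such generator to be squarefree. Hence $I$ is a squarefree monomial ideal, and by radicality $I = \bigcap_{i=1}^c P_{b_i}$. The standard Alexander-dual formula then expresses $I^\vee$ as the ideal generated by the squarefree products $\prod_{j=1}^m \prod_{k=1}^{b_{ij}} x_{j,k}$, one per minimal prime. Each such product is manifestly the polarization of the monomial $\prod_{b_{ij} > 0} x_j^{b_{ij}}$, so $I^\vee = J^{\mathrm{pol}}$. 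Since Alexander duality is an involution on squarefree monomial ideals, this yields $I = (J^{\mathrm{pol}})^\vee$.

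For the linear-resolution statement, suppose all minimal generators of $I$ share the multidegree $a \in \{0,1\}^m$, so in particular they all have total degree $d = |a|$. The Eagon-Reiner theorem then says $I$ has a $d$-linear resolution if and only if $S/I^\vee$ is Cohen-Macaulay. Because polarization preserves Cohen-Macaulayness of the quotient, this reduces to showing $R/J$ is Cohen-Macaulay.

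The main obstacle is precisely this last Cohen-Macaulayness verification. My plan is to identify enough minimal primes of $I$ to pin down the structure of $R/J$. For each $j$ with $a_j = 1$, set $s_j$ to be the maximum column index occurring in row $j$ among the generators of $I$. The Borel-exchange property, together with the fact that each generator uses exactly one variable from row $j$, should force $P_{s_j e_j}$ to be a minimal prime of $I$: any candidate $P_{b'} \subsetneq P_{s_j e_j}$ must have $b'_j < s_j$ and $b'_{j'}=0$ otherwise, hence cannot contain the generator realizing $\sigma(j) = s_j$. This yields $x_j^{s_j} \in J$, while for $j$ with $a_j = 0$ no minimal prime touches row $j$, so $x_j$ is absent from $J$. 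Consequently $R/J$ is a polynomial ring (in the variables $x_j$ with $a_j=0$) over an Artinian monomial quotient, hence Cohen-Macaulay, completing the proof.
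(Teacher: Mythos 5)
Your proof is correct and follows essentially the same route as the paper: unwind the definition of Alexander duality applied to $I=\bigcap_i P_{b_i}$ to identify $I^\vee$ with the polarization of $J$, then establish that $\sqrt{J}$ is generated by exactly those $x_j$ in the common support of the generators of $I$, so that $R/J$ (and hence its polarization) is Cohen-Macaulay, and finish with Eagon--Reiner. The paper states these steps more tersely (asserting directly that $\sqrt{J}=(x_1,\dots,x_u)$), while you supply the same fact via the observation that each $P_{s_je_j}$ is a minimal prime; note also that squarefreeness of $I$ already follows from $I$ being radical and monomial, so the appeal to Lemma~\ref{easy3} is not actually needed.
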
 
\begin{proof} 
The first assertion follows immediately from the definition of
polarization and Alexander duality, see \cite[Chap.5]{MS}. For the
second, one observes that if all the generators of $I$ have degree,
say,  $e_1+e_2+\cdots+e_u \in \ZZ^m$, then $I$  is the Alexander dual
of the polarization of an ideal $J\subset K[x_1,\dots,x_m]$ involving only
variables $x_i$ with $i\le u$ and whose radical is
$(x_1,\dots,x_u)$. Hence $J$ defines a Cohen-Macaulay ring, and so
does its polarization. Finally one applies the Eagon-Reiner Theorem
\cite[Thm. 8.1.9]{HH}.  
\end{proof} 

The goal of this section is to prove the following:
 
\begin{theorem}\label{onlyone}
Let  $I,J\subset S$ be  Borel fixed ideals  such that
$\HS(I,y)=\HS(J,y)$.  If $I$ is radical then $I=J$.  
\end{theorem}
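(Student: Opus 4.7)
The plan is to reduce the theorem to showing that the minimal primes of $J$ coincide with those of $I$. Since $I$ is radical, $I = \bigcap_{k=1}^c P_{b_k}$, so the equality of minimal primes immediately gives $\sqrt{J} = \bigcap_k P_{b_k} = I$; then the inclusion $J \subseteq \sqrt{J} = I$ combined with $\HS(S/J) = \HS(S/I)$ forces $J = I$, because the quotient $I/J$ would have Hilbert series zero.

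To recover the minimal primes of a Borel fixed ideal from its Hilbert series, I would introduce a \emph{geometric multidegree} $\GG(M;z) := \KK(M;\, 1-z_1, \dots, 1-z_m)$, regarded as a polynomial in $z_1, \dots, z_m$; this is manifestly determined by $\HS(M;y)$. For an irreducible monomial ideal $Q = (x_{i_s, j_s}^{a_s})_{s=1}^r$ one computes $\KK(S/Q;y) = \prod_s (1 - y_{i_s}^{a_s})$, so $\GG(S/Q;z) = \prod_s \bigl(1-(1-z_{i_s})^{a_s}\bigr)$, whose componentwise-minimal support monomial is $z^{c(Q)}$ with $c(Q)_i = |\{s : i_s = i\}|$, carrying the positive integer coefficient $\prod_s a_s$. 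Using the irreducible decomposition of a general monomial ideal $L = \bigcap_\alpha Q_\alpha$ together with inclusion-exclusion, and the fact that the minimal primes of a Borel fixed $L$ are themselves of the form $P_{c_l}$ by Lemmas~\ref{easy1} and \ref{easy2}, one aims at the key structural statement: the componentwise-minimal monomials in the support of $\GG(S/L;z)$ are exactly $\{z^{c_l}\}_l$, each appearing with a positive integer coefficient equal to the length $\ell_{P_{c_l}}((S/L)_{P_{c_l}})$.

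Specializing to $I = \bigcap_{k=1}^c P_{b_k}$ (whose irreducible decomposition is its minimal prime decomposition), inclusion-exclusion gives
\[
\GG(S/I;z) = \sum_{\emptyset \ne T \subseteq \{1,\dots,c\}} (-1)^{|T|+1} z^{b(T)}, \qquad b(T)_i := \max_{k \in T} b_{k,i}.
\]
Since the minimal primes are pairwise incomparable, the $b_k$ form an antichain in the componentwise order on $\NN^m$, so for $|T| \ge 2$ one has $b(T) \gneq b_k$ for every $k \in T$; consequently the componentwise-minimal support of $\GG(S/I;z)$ is precisely $\{z^{b_1},\dots,z^{b_c}\}$, each with coefficient $+1$. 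Comparing with the analogous description applied to $\GG(S/J;z)$ forces the minimal primes of $J$ to coincide with $\{P_{b_k}\}$ and each multiplicity to equal $1$, so $\sqrt{J} = I$ and the theorem follows. The main obstacle is the structural statement for a general Borel fixed $L$: the irreducible components $Q_\alpha$ are typically \emph{not} Borel fixed individually, so in the alternating sum coming from inclusion-exclusion there is a priori the danger that minimal-prime contributions cancel or that spurious monomials appear at the bottom of the support. Controlling this, by showing that every contribution coming from a non-minimal component lifts to componentwise-strictly-larger multidegrees, is precisely what the geometric-multidegree machinery (generalizing the Miller-Sturmfels multidegree from the top-dimensional case to components of arbitrary codimension) is designed to accomplish.
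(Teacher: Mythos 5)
You correctly identify the overall reduction and the role of the transformed $\KK$-polynomial $\CC(M,y)=\KK(M,1-y_1,\dots,1-y_m)$, and you carry the computation of its bottom support all the way through for the \emph{radical} Borel fixed ideal $I=\bigcap_k P_{b_k}$: since the $b_k$ form an antichain, every $T$ with $|T|\ge 2$ produces a bottom monomial $z^{b(T)}$ strictly above some $b_k$, so the componentwise-minimal support is exactly $\{z^{b_1},\dots,z^{b_c}\}$ with coefficients $+1$. The gap is precisely where you flag it: to conclude $\Min(J)=\Min(I)$ with all multiplicities $1$, you need the ``key structural statement'' (the paper's Proposition~\ref{menomale}) for the possibly \emph{non-radical} Borel fixed ideal $J$, and you leave it unproved. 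There the irredundant irreducible decomposition can have several components $Q_\alpha, Q_{\alpha'}$ with the same radical $P_{b_l}$; every nonempty $T$ inside that block produces the \emph{same} bottom monomial $z^{b_l}$ but with sign $(-1)^{|T|+1}$ and coefficient $\prod_s a_s(T)$, so cancellation really does occur, and one must check both that the signed sum equals $\length\bigl((S/J)_{P_{b_l}}\bigr)>0$ and that no $T$ produces a bottom monomial strictly below any $b_l$. Both checks can in fact be pushed through — by Lemma~\ref{easy2} every associated prime is a Borel prime $P_c$, so the radical of $\sum_{\alpha\in T}Q_\alpha$ is $P_{\max_{\alpha\in T}c_\alpha}$, which keeps all bottoms $\ge$ some $b_l$; and the signed sum is exactly the inclusion-exclusion for lengths after localizing at $P_{b_l}$ — but until you do this the argument is incomplete, and this is the part the theorem actually lives on.

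The paper proves the same key statement by a prime filtration $0=M_0\subset\cdots\subset M_h=S/J$ with $M_i/M_{i-1}\cong S/P_i(-v_i)$, which is a cleaner device for exactly the reason you worry about: each factor contributes $\prod_j(1-y_j)^{v_{ij}}\,y^{a(P_i)}=y^{a(P_i)}+(\text{strictly higher terms})$ to $\CC(S/J,y)$ with leading coefficient $+1$, so the contributions to the bottom of the support all have the same sign and cannot cancel, and the standard fact that a minimal prime $P$ occurs among the filtration primes exactly $\length((S/J)_P)$ times hands you the multiplicities directly. So your route to the key lemma is genuinely different (irreducible decomposition plus Mayer--Vietoris, rather than a prime filtration) and viable, but unfinished; the filtration argument sidesteps the alternating-sign bookkeeping that you correctly identify as the main obstacle.
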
 

The most important consequence of Theorem~\ref{onlyone} is the
following rigidity result:  

\begin{corollary}\label{multigin}
Let $I$ be a radical Borel fixed ideal. For every multigraded ideal
$J$ with $\HS(J,y)=\HS(I,y)$  one has:  
\begin{itemize}
\item[(a)]  $\gin_\prec(J)=I$ for every term order $\prec$. 
\item[(b)]  $J$ is radical. 
\item[(c)]  $J$ has a linear resolution whenever $I$ has a linear resolution. 
\item[(d)]  $S/J$ is Cohen-Macaulay whenever $S/I$  is Cohen-Macaulay. 
\item[(e)] $\beta_{i,a}(S/J)\leq \beta_{i,a}(S/I)$ for every $i\in
  \NN$ and $a\in \ZZ^m$ and $\beta_{i,a}(S/J)=0$ if  $a\not\leq
  (1,1,\dots,1)\in \ZZ^m$.  
\end{itemize}  
\end{corollary}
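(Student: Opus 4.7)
The plan is to deduce all five statements from Theorem~\ref{onlyone} combined with standard properties of multigraded generic initial ideals and of Gr\"obner deformations, transplanted from the singly-graded setting.

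For part (a), I would use that for any multigraded ideal $J$ the multigraded generic initial ideal $\gin_\prec(J)$ is Borel fixed (as noted in the paragraph preceding Lemma~\ref{easy1}) and equals $\ini_\prec(gJ)$ for generic $g\in G$. Since $G$ consists of $\ZZ^m$-graded automorphisms, $\HS(\gin_\prec(J),y)=\HS(J,y)=\HS(I,y)$. Applying Theorem~\ref{onlyone} to the two Borel fixed ideals $\gin_\prec(J)$ and $I$, with $I$ radical, forces $\gin_\prec(J)=I$. Part (b) is then immediate: $\ini_\prec(gJ)=\gin_\prec(J)=I$ is squarefree, and an ideal whose initial ideal is squarefree is itself radical, so $gJ$ and hence $J$ are radical.

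The remaining parts all flow from the pointwise upper semicontinuity of multigraded Betti numbers under Gr\"obner deformation, which together with (a) gives
\[
\beta_{i,a}(S/J)\les \beta_{i,a}(S/\gin_\prec(J))=\beta_{i,a}(S/I)
\]
for every $i\in\NN$ and $a\in\ZZ^m$. This is exactly the inequality in (e). For the vanishing assertion of (e), Lemma~\ref{easy3} bounds the multidegrees of the minimal generators of $I$ componentwise by $(1,\ldots,1)$; since every multidegree in the minimal multigraded free resolution of $S/I$ coincides with the lcm-multidegree of some subset of generators (it appears already in the Taylor complex), it is likewise bounded by $(1,\ldots,1)$, so $\beta_{i,a}(S/I)=0$ for $a\not\les(1,\ldots,1)$, and the same then holds for $S/J$. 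For (c), a linear resolution of $I$ in degree $d$ confines all nonzero $\beta_{i,j}(S/I)$ to the strand $j=i+d-1$; the pointwise inequality forces the same pattern for $S/J$, and since $\HS(J,y)=\HS(I,y)$ puts the minimal generators of $J$ in degree $d$ as well, $J$ has a linear resolution. For (d), the Betti inequality gives $\pd(S/J)\les\pd(S/I)$, and since $S/I$ and $S/J$ share the same Hilbert series (and hence the same dimension), Auslander--Buchsbaum yields $\depth(S/J)\ges\depth(S/I)=\dim(S/I)=\dim(S/J)$.

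The only mildly delicate step is the vanishing in~(e), where the multidegree bound on the generators of $I$ coming from Lemma~\ref{easy3} must be promoted to a bound on the whole multigraded resolution; the Taylor-complex observation handles this routinely. Everything else is a direct bookkeeping application of the rigidity statement in Theorem~\ref{onlyone} combined with the standard semicontinuity of Betti numbers and preservation of Hilbert series under Gr\"obner degeneration, in their multigraded versions.
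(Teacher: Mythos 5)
Your proposal is correct and follows the same route as the paper: part~(a) is obtained exactly as you do, by noting that $\gin_\prec(J)$ is Borel fixed with the same multigraded Hilbert series as $I$ and invoking Theorem~\ref{onlyone}, while the paper simply labels (b)--(d) ``standard applications'' and points to Lemma~\ref{easy3} and the Taylor complex for (e); your write-up merely spells out those standard steps (squarefree $\gin$ implies radical, semicontinuity of multigraded Betti numbers under Gr\"obner degeneration, lcm-twists in the Taylor complex, and Auslander--Buchsbaum), and does so correctly.
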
 
\begin{proof} The ideal  $\gin_\prec(J)$ is a Borel fixed ideal and
  $\HS(J,y)=\HS(\gin_\prec(J),y)$. Since, by assumption,
  $\HS(J,y)=\HS(I,y)$, we may conclude, by virtue of
  Theorem~\ref{onlyone} that $\gin_\prec(J)=I$. This proves (a).
  Statements (b),(c) and (d)  are standard applications of  well-known
  principles.  Finally (e) follows from Lemma~\ref{easy3} and from the
  bounds derived form the Taylor complex, see \cite[Chap.6]{MS} 
\end{proof}

In order to prove Theorem~\ref{onlyone} we need the following definition. 

\begin{definition} For every finitely generated $\ZZ^m$-graded $S$-module $M$ we set 
$$\CC(M,y)=\KK(M,1-y_1,\dots, 1-y_m)\in  \ZZ[|y_1,\dots,y_m|]$$
and we define the G-multidegree (geometric multidegree) of $M$ as
$$\GG(M,y)=\sum  c_a y^a  \in  \ZZ[y_1,\dots,y_m]$$
where the sum is over the $a\in \ZZ^m$ which are minimal in the
support of ${\CC}(M,y)$ and $c_a$ is the coefficient of $y^a$ in
$\CC(M,y)$.  
\end{definition} 

The following result follows immediately from the definition above. 

\begin{proposition}\label{easyp}
\begin{itemize} 
 \item[(1)] Let  $P$ be a prime ideal generated by variables and  let
   $a(P)$ be the vector whose $i$-th  coordinate is $\# (P\cap
   \{x_{i1}, \dots, x_{in_i}\})$. Then  
$$\GG(S/P,y)=y^{a(P)}$$
\item[(2)] One has $a(P_{b})=b$ for every $b\in U$ and for $b_1,b_2\in
  U$ one has $P_{b_1}\subseteq  P_{b_2}$ if and only if $y^{b_1} |
  y^{b_2}$. 
\end{itemize} 
\end{proposition}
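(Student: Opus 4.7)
The plan is to prove part (1) by an explicit Hilbert series computation, and part (2) by unwinding the definitions of $P_b$ and of divisibility of monomials.

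For part (1), I would first observe that if $P$ is a prime ideal generated by variables and $a_i = a(P)_i$ is the number of variables of row $i$ that lie in $P$, then $S/P$ is itself a polynomial ring over $K$ in the remaining $(n_i - a_i)$ variables of multidegree $e_i$, for $i=1,\dots,m$. This gives the multigraded Hilbert series
$$\HS(S/P,y) \;=\; \prod_{i=1}^m \frac{1}{(1-y_i)^{n_i-a_i}}.$$
Multiplying by $\prod_{i=1}^m (1-y_i)^{n_i}$ yields the $\KK$-polynomial
$$\KK(S/P,y) \;=\; \prod_{i=1}^m (1-y_i)^{a_i},$$
and substituting $y_i \mapsto 1-y_i$ gives
$$\CC(S/P,y) \;=\; \prod_{i=1}^m y_i^{a_i} \;=\; y^{a(P)}.$$
Since $\CC(S/P,y)$ is already a single monomial, its support has a unique minimal element, namely $a(P)$ itself, with coefficient $1$. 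Thus $\GG(S/P,y) = y^{a(P)}$.

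For part (2), the equality $a(P_b) = b$ follows immediately from the defining formula $P_b = (x_{ij} : 1 \le i \le m,\ 1 \le j \le b_i)$: for each row index $i$, the variables of $P_b$ of the form $x_{ij}$ are precisely $x_{i1},\dots,x_{i b_i}$, of which there are $b_i$. For the inclusion criterion, note that $P_{b_1} \subseteq P_{b_2}$ is equivalent to every generator $x_{ij}$ of $P_{b_1}$ lying in $P_{b_2}$, i.e.\ $(b_1)_i \le (b_2)_i$ for every $i$, which is in turn equivalent to $y^{b_1} \mid y^{b_2}$.

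Neither step poses a real obstacle: part (1) is just a factorization of the Hilbert series that collapses cleanly after the $y_i \mapsto 1-y_i$ substitution, and part (2) is tautological. The only mild subtlety is verifying that the minimal element of the support of $\CC(S/P,y)$ really is $a(P)$, but this is automatic because $\CC(S/P,y)$ is a single monomial rather than a polynomial with several terms.
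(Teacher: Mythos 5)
Your proof is correct and is exactly the computation the paper leaves implicit: the paper states the proposition ``follows immediately from the definition,'' and your explicit Hilbert-series factorization $\HS(S/P,y)=\prod_i(1-y_i)^{-(n_i-a_i)}$ leading to $\CC(S/P,y)=y^{a(P)}$ is the intended route, with part (2) indeed being a tautology.
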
 

The key observation is  the following: 

\begin{proposition}\label{menomale} 
 Let  $I$ be a Borel fixed ideal. One has 
$$\GG(S/I,y)=\sum_{i=1}^c \length( (S/I)_{P_{b_i}})  y^{b_i}$$
where  $\Min(I)=\{P_{b_1},\dots, P_{b_c}\}$ for some $b_i\in U$. 
\end{proposition}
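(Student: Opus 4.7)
The plan is to take a multigraded prime filtration of $S/I$, use additivity of the $\KK$-polynomial on short exact sequences, and then identify the minimal monomials in the resulting $\CC$-polynomial.

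First I would compute $\CC(S/P_b, y) = y^b$ directly for every $b \in U$. Since $S/P_b$ is a polynomial ring in the $n_i - b_i$ variables of degree $e_i$ (for each $i$), its Hilbert series is $\prod_i (1-y_i)^{-(n_i-b_i)}$, so $\KK(S/P_b, y) = \prod_i (1-y_i)^{b_i}$, and the substitution $y_i \mapsto 1-y_i$ gives $\CC(S/P_b, y) = y^b$. Next I would choose a multigraded prime filtration
$$0 = M_0 \subsetneq M_1 \subsetneq \cdots \subsetneq M_r = S/I$$
with $M_i/M_{i-1} \cong (S/Q_i)(-c_i)$ for some $\ZZ^m$-graded primes $Q_i \supseteq I$ and shifts $c_i \in \NN^m$. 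Since $I$ is Borel fixed, iterated application of Lemma~\ref{easy2} shows that every $Q_i$ is Borel fixed, hence of the form $P_{d_i}$ for some $d_i \in U$ by Lemma~\ref{easy1}. Additivity of the $\KK$-polynomial on short exact sequences together with the shift rule $\KK(N(-c), y) = y^c \KK(N, y)$, followed by the substitution $y_i \mapsto 1-y_i$, yields
$$\CC(S/I, y) = \sum_{i=1}^r (1-y)^{c_i}\, y^{d_i}.$$
A standard property of prime filtrations is that every minimal prime $P_{b_j}$ of $I$ appears among the $Q_i$ exactly $\ell_j := \length((S/I)_{P_{b_j}})$ times.

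The crux is to read off the minimal monomials and their coefficients. Each summand $(1-y)^{c_i} y^{d_i}$ expands into monomials $y^{d_i + a}$ with $0 \leq a \leq c_i$, whose coordinatewise minimum is $y^{d_i}$ with coefficient $1$. If $P_{b_j}$ is a minimal prime of $I$, no $d_i$ in the filtration can satisfy $d_i < b_j$: by Proposition~\ref{easyp}(2) this would give $P_{d_i} \subsetneq P_{b_j}$ while $P_{d_i} \supseteq I$, contradicting the minimality of $P_{b_j}$. Hence the coefficient of $y^{b_j}$ in $\CC(S/I, y)$ is exactly $\ell_j$, contributed only by the $\ell_j$ filtration steps with $d_i = b_j$. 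Moreover $y^{b_j}$ is minimal in the support of $\CC(S/I,y)$, since any $y^a$ in the support with $a \leq b_j$ must come from a summand with $d_i \leq a \leq b_j$, forcing $d_i = b_j$ and thus $a = b_j$.

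Conversely, every minimal element of the support of $\CC(S/I, y)$ is the coordinatewise minimum of some summand, hence of the form $y^{d_i}$. If $P_{d_i}$ were not a minimal prime of $I$, it would strictly contain some $P_{b_j}$, so by Proposition~\ref{easyp}(2) we would have $b_j < d_i$, and $y^{b_j}$ would sit strictly below $y^{d_i}$ in the support (with nonzero coefficient $\ell_j > 0$, as shown above), contradicting minimality. This exhausts the minimal monomials and gives the asserted formula for $\GG(S/I, y)$. The main obstacle is the last bookkeeping step — confirming that no cancellation across summands spoils the coefficient of $y^{b_j}$ — and it is resolved entirely by the order-reversing bijection between inclusions of Borel fixed primes and divisibility of the monomials $y^b$ from Proposition~\ref{easyp}(2).
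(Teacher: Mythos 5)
Your overall strategy is the same as the paper's: take a $\ZZ^m$-graded prime filtration, compute $\CC(S/P_b,y)=y^b$, use additivity to express $\CC(S/I,y)$ as a sum of terms $(1-y)^{c_i}y^{d_i}$, and then read off the minimal exponents and their coefficients via the order-reversing correspondence of Proposition~\ref{easyp}(2). The length count via localization is also the same. So the structure is right.

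There is, however, a genuine gap in the claim that ``iterated application of Lemma~\ref{easy2} shows that every $Q_i$ is Borel fixed, hence of the form $P_{d_i}$.'' Lemma~\ref{easy2} only controls the \emph{associated} primes of a Borel fixed ideal, and the primes appearing in an arbitrary prime filtration of $S/I$ need not all be associated primes of $S/I$. The ``iteration'' would require that at each step the quotient $M/M_i$ is again of the form $S/J$ with $J$ Borel fixed; but $M_1=(u)+I/I$ for a monomial $u$ gives $M/M_1=S/(I+(u))$, and $I+(u)$ is not Borel fixed for an arbitrary such $u$. This matters because you then invoke Proposition~\ref{easyp}(2) to pass from the numerical inequality $d_i<b_j$ to the ideal inclusion $P_{d_i}\subsetneq P_{b_j}$; that equivalence is stated only for the Borel fixed primes $P_b$, $b\in U$, so it silently uses $Q_i=P_{d_i}$. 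The same issue reappears in your converse step, where you assert that every minimal support element is the coordinatewise minimum $y^{d_i}$ of some summand; without knowing which monomials cancel, this is not immediate.

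The repair is cheap and is exactly what the paper does: one never needs the filtration primes to be Borel fixed, only that each $Q_i$ is a monomial prime containing $I$ (which is automatic, since $I=\operatorname{ann}(S/I)$ annihilates every subquotient in the filtration). From $Q_i\supseteq I$ one gets $Q_i\supseteq P_{b_k}$ for some minimal prime $P_{b_k}$, hence $d_i=a(Q_i)\geq b_k$ by simply counting variables in each row-block. That one-sided implication (containment implies divisibility, valid for arbitrary monomial primes) is all that is used: it shows no $d_i$ can be strictly below a minimal $b_j$, so the coefficient of $y^{b_j}$ is $\ell_j>0$ with no cancellation; and it shows any support element $a\geq d_i\geq b_k$ dominates some $y^{b_k}$ already known to lie in the support, so the minimal support elements are exactly the $y^{b_j}$. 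If you replace the Borel fixed claim and the appeal to the biconditional in Proposition~\ref{easyp}(2) by this weaker but sufficient observation, the proof matches the paper's.
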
 

\begin{proof}  
In order to compute the $\KK$-polynomial of $M=S/I$, consider a
filtration of $\ZZ^m$-graded modules  
$$0=M_0\subset M_1\subset \dots \subset M_h=M$$
such that $M_i/M_{i-1}\simeq S/P_i(-v_i)$. Here $P_i$ is a
$\ZZ^m$-graded monomial prime ideal and $v_i=(v_{i1},\dots, v_{im})
\in \ZZ^m$.  
Existence of such a filtration follows from basic commutative algebra
facts, see \cite[Prop.3.7]{E}.
Furthermore 
$$\Min(I)\subseteq \Ass(S/I)\subseteq  \{P_1,\dots,P_h\}.$$ 
Hence we have 
$$\KK(S/I,y)=\sum_{i=1}^h \KK(S/P_i(-v_i),y)=\sum_{i=1}^h y^{v_i}  \KK(S/P_i,y).$$
It follows that 
$$\CC(S/I,y)=\sum_{i=1}^h \prod _{j=1}^m (1-y_j)^{v_{ij}}  \CC(S/P_i,y).$$
 
Then the support of the polynomial $\prod _{j=1}^m
(1-y_j)^{v_{ij}}\CC(S/P_i,y)$ contains exactly one minimal element,
namely $y^{a(P_i)}$, which appears in the polynomial with coefficient
$1$.  
 It follows that $\GG(S/I,y)$ is obtained as the sum of the terms
 which are minimal in the support of the polynomial 
\begin{equation}\label{supp}
\sum_{i=1}^h y^{a(P_i)} 
\end{equation}    
Now the elements that are minimal support in the support of
\eqref{supp} are exactly the $y^{b_i}$ corresponding to the minimal
primes $P_{b_i}$. This follows from Proposition~\ref{easyp}, since if
$P\subseteq  P'$, then $y^{a(P)} | y^{a(P')}$. Finally, by standard
localization arguments we have that each minimal prime $P_{b_i}$
appears in the multiset $\{P_1,\dots,P_h\}$ as many times as $\length(
(S/I)_{P_{b_i}})$.  
 \end{proof}
 
We are finally ready to prove Theorem \ref{onlyone}. 
 
\begin{proof}[Proof of Theorem \ref{onlyone}] 
Since $I$ and $J$ have the same Hilbert series we have that 
$\CC(S/I,y)=\CC(S/J,y)$ and hence 
$$\GG(S/I,y)=\GG(S/J,y).$$
It follows by Proposition~\ref{menomale} that $\Min(I)=\Min(J)$. Since
$I$ is radical, the coefficients in $\GG(S/I,y)$ are all $1$. Hence
the primary decomposition of $J$ is of the form $I\cap Q$, where $Q$
is the intersection of the components associated to the embedded prime
ideals of $J$, if any.  
We deduce that $J\subseteq I$ and the Hilbert series forces the equality $I=J$.
\end{proof}

\section{Column-graded ideals of maximal minors}\label{colgrad}
 
Consider $S=K[x_{ij} : 1\leq i \leq m,  \quad 1\leq j \leq n]$ graded
by $\deg(x_{ij})=e_j\in \ZZ^n$.  
Let $L=(L_{ij})$ be a $m\times n$ matrix of linear forms which is
column-graded, that is, whose entries $L_{ij}$ satisfy
$\deg(L_{ij})=e_j$.  
In other words, $$L_{ij}=\sum_{k=1}^m  \lambda_{ijk} x_{kj}$$ where
$\lambda_{ijk}\in K$.  
As a first direct application of Corollary~\ref{multigin} we have: 

\begin{theorem}
\label{superBSZ1} 
Let $L=(L_{ij})$ be a $m\times n$ matrix which is column-graded and
assume that the codimension of $I_m(L)$ is $n-m+1$. Then $I_m(L)$  is
radical and the maximal minors of $L$  form a universal Gr\"obner
basis of it. Furthermore every initial ideal of $I_m(L)$ is radical,
has a linear resolution, and its Betti numbers equals those of
$I_m(L)$.  
\end{theorem}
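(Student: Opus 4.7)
The plan is to apply Corollary~\ref{multigin} to the candidate radical Borel-fixed ideal
\[
I^* \;=\; \bigl(x_{1,c_1}x_{1,c_2}\cdots x_{1,c_m} \;:\; 1\le c_1<c_2<\cdots<c_m\le n\bigr),
\]
the ideal of squarefree degree-$m$ monomials in the variables $x_{1,1},\dots,x_{1,n}$. It is radical (squarefree) and fixed by $B=B_m(K)^n$, because its generators only involve row-$1$ variables, which is the Borel-smallest slot in each column (there is no smaller first index to swap to). By Lemma~\ref{easy4}, $I^*$ is the Alexander dual of the ideal $J\subset K[x_1,\dots,x_n]$ of squarefree monomials of degree $n-m+1$; since $K[x_1,\dots,x_n]/J$ is the Stanley-Reisner ring of the $(n-m-1)$-skeleton of the $(n-1)$-simplex, which is Cohen-Macaulay, the Eagon-Reiner theorem gives $I^*$ a linear resolution.

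Next I would establish the Hilbert series equality $\HS(S/I^*,y)=\HS(S/I_m(L),y)$ by comparing both sides to $\HS(S/I_m(X),y)$ for the generic matrix $X=(x_{ij})$. Since $I_m(L)$ and $I_m(X)$ both have codimension $n-m+1$ (the former by hypothesis, the latter classically) and their defining matrices share the same column degrees, the Eagon-Northcott complex minimally resolves each with identical multigraded Betti numbers, giving $\HS(S/I_m(L))=\HS(S/I_m(X))$. On the other hand, the decomposition $S/I^*\cong\bigl(K[x_{1,1},\dots,x_{1,n}]/I^*_0\bigr)\otimes_K K[x_{ij}:i\ge 2]$---where $I^*_0$ is the squarefree degree-$m$ monomial ideal inside $K[x_{1,1},\dots,x_{1,n}]$---combined with the known minimal resolution of $I^*_0$ via Alexander duality, shows that the multigraded Betti numbers of $I^*$ match those coming from the Eagon-Northcott complex of $X$. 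Hence $\HS(S/I^*)=\HS(S/I_m(X))$.

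Corollary~\ref{multigin} with $I=I^*$ then applies. Taking $J=I_m(L)$ yields that $I_m(L)$ is radical with a linear resolution; taking $J=\ini_\prec(I_m(L))$ for any term order $\prec$ (a multigraded ideal with the same Hilbert series) yields the same for every initial ideal. Part~(e) of the corollary combined with Hilbert-series counting forces $\ini_\prec(I_m(L))$ to have exactly one minimal generator in each multidegree $e_{c_1}+\cdots+e_{c_m}$ and none elsewhere. But $\ini_\prec([c_1,\dots,c_m]_L)$ is a nonzero monomial in that very multidegree---nonzero because Eagon-Northcott makes the $(e_{c_1}+\cdots+e_{c_m})$-piece of $I_m(L)$ one-dimensional, spanned by the minor itself---so it must coincide with this unique minimal generator. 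Hence the $\binom{n}{m}$ maximal minors form a Gr\"obner basis for every $\prec$, and the Betti-number equality follows since linear resolutions with the same Hilbert series have the same Betti numbers.

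\textbf{Main obstacle.} The crux of the proof is the Hilbert-series identity $\HS(S/I^*)=\HS(S/I_m(X))$: identifying the minimal multigraded resolution of $I^*$ (obtained from Alexander duality with a Cohen-Macaulay skeleton of the simplex) and matching its Betti numbers to those of the Eagon-Northcott complex of the generic matrix is the combinatorial heart of the argument.
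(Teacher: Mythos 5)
Your overall strategy is the same as the paper's: identify the radical Borel-fixed ideal $I^*=(x_{1,c_1}\cdots x_{1,c_m})$, verify $\HS(S/I^*,y)=\HS(S/I_m(L),y)$, and invoke Corollary~\ref{multigin}. You also correctly spell out (more explicitly than the paper) how the Hilbert-series equality together with part~(e) of the corollary forces the leading terms of the minors to generate $\ini_\prec(I_m(L))$, giving the universal Gr\"obner basis statement.

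The one place you diverge, and where you correctly flag an unfinished ``main obstacle,'' is the key identity $\HS(S/I^*,y)=\HS(S/I_m(X),y)$. You propose to compute the minimal $\ZZ^n$-graded resolution of $I^*$ via the tensor decomposition and Alexander duality with a skeleton of the simplex, and then combinatorially match those Betti numbers against the Eagon--Northcott resolution of $I_m(X)$ --- a real piece of work which you do not carry out. The paper sidesteps this entirely with a one-line observation: $I^*$ \emph{is already} the ideal of maximal minors of a column-graded $m\times n$ matrix, namely $(a_{ij}\,x_{1j})$ for generic scalars $a_{ij}\in K$, and it has codimension $n-m+1$. Hence $I^*$ and $I_m(L)$ are both minimally resolved by the \emph{same} $\ZZ^n$-graded Eagon--Northcott complex, so their multigraded Hilbert series coincide with no further computation. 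Replacing your Alexander-duality/Betti-matching step by this observation closes the gap and recovers the paper's proof; as a side benefit it also makes the linear-resolution and Cohen--Macaulay hypotheses on $I^*$ (needed for parts~(c) and~(d) of Corollary~\ref{multigin}) immediate from Eagon--Northcott, so the excursion through the skeleton of the simplex is not needed at all.
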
 

\begin{proof} We may assume without loss of generality that $K$ is
  infinite. Let $I=( x_{1j_1}x_{1j_2}\cdots x_{1j_m} : 1\leq
  j_1<j_2<\dots< j_m \leq n)$. Then $I$  is generated by the maximal
  minors of a column-graded  matrix  whose $(i,j)$-th entry is
  $a_{ij}x_{1j}$ with randomly chosen scalars $a_{ij}$. Since the
  codimension of $I$ is $n-m+1$, by the Eagon-Northcott complex it
  follows that $I$ and $I_m(L)$ have the same multigraded Hilbert
  series. Since $I$ is radical and Borel fixed, we may apply Corollary
  \ref{multigin} with $J=I_m(L)$ or $J$ equal any initial ideal of
  $I_m(L)$ to conclude.  
\end{proof} 
 
We want now to generalize Theorem \ref{superBSZ1} and get rid of the
assumption on the codimension of $I_m(L)$.  
\begin{theorem}
\label{superBSZ2} 
Let $L=(L_{ij})$ be an $m\times n$ matrix which is column-graded.  Then:
\begin{itemize}
\item[(a)]  The maximal minors of $L$ form a universal Gr\"obner basis of $I_m(L)$.
\item[(b)] $I_m(L)$ is radical and it  has a linear resolution.
\item[(c)]  Any initial ideal $J$ of $I_m(L)$ is radical and has a
  linear resolution.   In particular,
  $\beta_{i,j}(I_m(L))=\beta_{i,j}(J)$ for all $i,j$.  
\item[(d)]  Assume that $I_m(L)\neq 0$ and that no column of $I_m(L)$
  is identically $0$.  Then the projective dimension of $I_m(L)$ (and
  hence of all its initial ideals) is $n-m$.  
\end{itemize} 
\end{theorem}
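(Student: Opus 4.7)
The plan is to apply Corollary~\ref{multigin} to a radical Borel-fixed ideal $I_0$ with the same multigraded Hilbert series as $I_m(L)$. Assume $K$ is infinite, and (for (d)) also $I_m(L) \neq 0$ with no column of $L$ identically zero. Set
$$\mathcal{B} := \{C \subseteq \{1,\ldots,n\} : |C| = m, \ \det L_{\cdot, C} \neq 0\}, \qquad I_0 := (x_{1, c_1} x_{1, c_2} \cdots x_{1, c_m} : \{c_1,\ldots,c_m\} \in \mathcal{B}) \subseteq S.$$
Then $I_0$ is squarefree (hence radical) and Borel-fixed, because each generator uses only the $x_{1, j}$'s, the minimal variables in their respective columns. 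The collection $\mathcal{B}$ is the set of bases of the transversal matroid $\mathcal{M}$ of the column spaces $V_j := \operatorname{Col}(\Lambda_j) \subseteq K^m$, where $\Lambda_j$ is the scalar matrix defining column $j$ of $L$. The Alexander dual of the Stanley--Reisner complex of $I_0$ is the independence complex of the dual matroid $\mathcal{M}^*$: this complex is shellable of pure dimension $n-m-1$, hence Cohen--Macaulay, so by the Eagon--Reiner theorem $I_0$ has a linear resolution, and a standard Stanley--Reisner computation gives $\pd(I_0) = n - m$.

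To match Hilbert series I adapt the specialization argument from the proof of Theorem~\ref{BSZB} and apply Lemma~\ref{help}. Consider $\Phi: S \to K[y_1, \ldots, y_n]$ given by $\Phi(x_{1j}) = y_j$ and $\Phi(x_{ij}) = a_{ij} y_j$ for $i > 1$, with generic $a_{ij} \in K^*$; then $\ker\Phi = (x_{ij} - a_{ij} x_{1j} : i > 1)$ is generated by $(m-1)n$ linear forms. By multilinearity of the determinant, $\Phi(\det L_{\cdot, C}) = \alpha_C \prod_{j \in C} y_j$ with $\alpha_C = \det[\Lambda_{c_1} a_{\cdot, c_1} \mid \cdots \mid \Lambda_{c_m} a_{\cdot, c_m}]$, and this is generically nonzero precisely when the $V_{c_k}$'s admit a transversal basis of $K^m$, that is, exactly when $C \in \mathcal{B}$. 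Similarly, if $D$ is the ideal of leading terms of the maximal minors with respect to a term order $\prec$, then $\Phi(D) = \Phi(I_m(L)) = \tilde I := (\prod_{j \in C} y_j : C \in \mathcal{B})$. Applying Lemma~\ref{help} with $M = S/I_m(L)$, $N = S/\ini_\prec(I_m(L))$, $T = S/D$, and $J = \ker\Phi$, conditions (1)--(3) are immediate; condition (4), that $\ker\Phi$ form an $M$-regular sequence, reduces via a dimension count to the codimension identity $\codim(I_m(L)) = \codim(\tilde I)$.

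This codimension identity is the main technical obstacle. The key step is to factor the evaluation map $\pi: \Spec(S) \to \prod_{j=1}^n V_j$, $x \mapsto (\Lambda_j x_{\cdot, j})_j$: this is a surjective linear projection with equidimensional fibers of dimension $\sum_j (m - r_j)$, so $\codim(I_m(L))$ equals the codimension of the rank-drop locus $Z \subseteq \prod_j V_j$. Now $Z = \bigcup_H Z_H$ where $H$ ranges over hyperplanes of $K^m$ and $Z_H := \{M : \text{columns of } M \subseteq H\}$; each $Z_H$ has codimension $\#\{j : V_j \not\subseteq H\}$ in $\prod_j V_j$, so $\codim(I_m(L)) = \min_H \#\{j : V_j \not\subseteq H\}$. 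This agrees with $\codim(\tilde I) = \min\{|T| : T \cap C \neq \emptyset \ \forall\, C \in \mathcal{B}\}$: every hyperplane $H$ yields a transversal $T_H = \{j : V_j \not\subseteq H\}$, and conversely every minimal transversal $T$ equals $T_H$ for some hyperplane $H$ containing $\sum_{j \notin T} V_j$ (whose dimension is strictly less than $m$ precisely because $T$ is a transversal). Lemma~\ref{help} then yields $\HS(I_m(L)) = \HS(I_0)$, and Corollary~\ref{multigin} delivers (a)--(c): the minors form a universal Gr\"obner basis since $\gin_\prec(I_m(L)) = I_0$ for every $\prec$, and radicality, linear resolutions, and Betti numbers propagate from $I_0$ to $I_m(L)$ and all its initial ideals. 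Part (d) follows from the Betti-number equality together with $\pd(I_0) = n - m$.
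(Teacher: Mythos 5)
Your strategy is to run the specialization argument of Theorem~\ref{BSZB} again, using Lemma~\ref{help}, and then invoke Corollary~\ref{multigin}. This is a genuinely different route from the paper, which does not use Lemma~\ref{help} here at all: instead the paper proves $I_0=\gin_\prec(I_m(L))$ directly via Buchberger's criterion, showing by a dimension count in each relevant multidegree that the $S$-pairs coming from the linear syzygies of $I_0$ reduce to zero. The reason the paper switches methods is precisely where your argument has a gap.

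The gap is in your treatment of hypothesis (4) of Lemma~\ref{help}. You write that the requirement that $\ker\Phi$ be an $S/I_m(L)$-regular sequence ``reduces via a dimension count to the codimension identity $\codim(I_m(L))=\codim(\tilde I)$,'' and then you prove the codimension identity. But equality of codimensions implies regularity of the sequence only when the module is Cohen--Macaulay (this is how it works in the proof of Theorem~\ref{BSZB} and of Theorem~\ref{superBSZ1}, where the Eagon--Northcott complex gives Cohen--Macaulayness). In Theorem~\ref{superBSZ2} the hypothesis $\codim I_m(L)=n-m+1$ has been dropped, and when $\codim I_m(L)<n-m+1$ the ring $S/I_m(L)$ is \emph{not} Cohen--Macaulay (its depth, once the theorem is proven, is $mn-(n-m+1)$, strictly less than its dimension $mn-\codim I_m(L)$). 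For a non-Cohen--Macaulay module, linear forms that cut the dimension down by the right amount need not form a regular sequence; conversely, arguing that they do would require knowing $\depth(S/I_m(L))\ge (m-1)n$, which is essentially the conclusion you are trying to reach. So the key step of your argument is circular as written, and is exactly the point where the easy specialization method breaks down and the paper's Buchberger computation takes over.

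A smaller issue: you assert that a ``standard Stanley--Reisner computation'' gives $\pd(I_0)=n-m$ from Cohen--Macaulayness of the matroid complex $M_L^*$. By Terai's formula $\pd(I_0)=\reg K[M_L^*]$, and Cohen--Macaulayness of dimension $n-m-1$ only gives $\reg\le n-m$. One needs the stronger fact that $M_L^*$ is $2$-Cohen--Macaulay (equivalently, has no cone points), which the paper establishes from the hypotheses that $I_m(L)\ne 0$ and no column of $L$ is zero, to conclude that the regularity equals the dimension and hence $\pd(I_0)=n-m$. Your set-up, identification of $I_0$ with the Alexander dual of the matroid complex, and the codimension computation via the rank-drop locus are all correct and pleasant, but the regularity step needs a different argument.
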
 
 
\begin{proof} Again we may assume that $K$ is infinite. Fix a term
  order $\prec$. It is not restrictive to assume that $x_{1j}\succ
  x_{ij}$ for all $i\neq 1$ and $j$; set for simplicity $x_j=x_{1j}$.   
Let $$I=(x_{j_1}\cdots x_{j_m}\mid [j_1,\ldots,j_m]_L\neq 0).$$  
We claim that $I=\gin_{\prec}(I_m(L))$. First we note that
$I\subseteq\gin_{\prec}(I_m(L))$.  This is because if
$[j_1,\ldots,j_m]_L\neq 0$, then $I_m(L)$ contains a non-zero element
of degree $e_{j_1}+\cdots+e_{j_m}$ and its initial term in generic
coordinates is $x_{j_1}\cdots x_{j_m}$.  

Next note that $I$ is the Stanley-Reisner ideal of the Alexander dual
of the matroid dual $M_L^*$ of the matroid $M_L$ associated to $L$. As
such, $I$ has a linear resolution by the Eagon-Reiner Theorem
\cite[Thm.8.1.9]{HH}, since $M_L^*$ is Cohen-Macaulay. By Buchberger's
Algorithm, in order to prove that $I=\gin_{\prec}(I_m(L))$ it suffices
to show that any $S$-pair associated to a linear syzygy  
among the generators of $I$ reduces to $0$. Any such linear syzygy
involves at most $m+1$ column indices in total. After renaming the
column indices, we may assume that  the syzygy in question involves
the column indices $\{1,2,\dots, m+1\}$. Set  
$$d=e_1+e_2+\cdots+e_{m+1}.$$ To prove that the $S$-polynomial reduces
to $0$ we may as well prove that $\dim I_m(L)_d\leq \dim I_d$.  
Let 
$$W=\{ u : 1\leq u\leq m+1 \mbox{ and }  [\{1,\dots, m+1\}\setminus
\{u\}]_L\neq 0\}.$$ Renaming if needed, we may assume that  
$$W=\{1,2,\dots,s\}$$
By definition $I_d$ is generated by the set of monomials 
$$\left\{ \frac{x_1x_2\cdots x_{m+1}}{x_j} x_{ij} \; :\; j=1,\dots s
  \mbox{ and  } i=1,\dots m\right\}$$ 
whose cardinality is easily seen to be $sm-s+1$. 
Hence it remains to prove that  
$$ \dim I_m(L)_d \leq sm-s+1.$$
Denote by $\Omega$ the first syzygy module of  $\{ [\{1,\dots,
m+1\}\setminus \{u\}]_L  : u=1,\dots, s\}$. Since  
$$\dim I_m(L)_d=sm-\dim\Omega_d $$ it suffices to show that
$$\dim \Omega_d \geq s-1.$$
 
Let $L_1$ be the submatrix of $L$ consisting of the first $s$ columns of $L$. 
Since the rows of $L_1$ are elements of $\Omega_d $, it is enough to
show that  that $L_1$ has at least $s-1$ linearly independent rows
over $K$. By contradiction, if this is not the case, by applying
invertible $K$-linear operations to the rows of  $L$ we  may assume
that  the last $m-s+2$ rows of $L_1$ are identically zero.  In
particular the minor $[2,\ldots,m+1]_{L}=0$, contradicting our
assumptions. 

Since $I$ is Borel fixed and radical with $\HS(I,y)=\HS(I_m(L),y)$, we
may apply Corollary~\ref{multigin} and deduce (a), (b) and (c). For
(d) one observes that, under the assumption that no column of $L$ is
$0$ and $I_m(L)\neq 0$, the ideal $I$ is non-zero and each of the
variables $x_1,\dots,x_n$ is involved in some generator. Then $M_L^*$
has dimension $n-m$ and has no cone-points.  This implies that  the
Stanley-Reisner ring  of $M_L^*$ has regularity $n-m$, as it is
$2$-Cohen-Macaulay (see \cite[pg.94]{S} for details). By
\cite[Prop.8.1.10]{HH} the projective dimension of  $I$ (that is the
Alexander dual of $M_L^*$) is $n-m$. 
\end{proof}

\section{Row-graded ideals of maximal minors} \label{rowgrad} 

In this section we treat  ideals of maximal minors of row-graded
matrices.  Consider $S=K[x_{ij} : i=1,\dots, m \mbox{ and } j=1,\dots,
n]$ graded by $\deg(x_{ij})=e_i\in \ZZ^m$.  
Let $L=(L_{ij})$ be a $m\times n$ matrix of linear forms which is
row-graded, i.e., whose entries $L_{ij}$ satisfy
$\deg(L_{ij})=e_i$. In other words,  
 $$L_{ij}=\sum_{k=1}^m  \lambda_{ijk} x_{ik}$$
 where $\lambda_{ijk}\in K$.  
 Observe that in the row-graded case  we cannot expect that the
 maximal minors of $X$ form a Gr\"obner basis simply because every
 maximal minor might have the same leading term. Nevertheless we can
 prove the following: 

\begin{theorem}\label{superBSZ3}
 
Let $L=(L_{ij})$ be an $m\times n$ matrix which is row-graded and
assume that the codimension of $I_m(L)$ is $n-m+1$. Then $I_m(L)$  is
radical and every initial ideal is generated by elements of total
degree $m$ (equivalently, there is a universal Gr\"obner basis  of
elements of degree $m$).   Furthermore every initial ideal of $I_m(L)$
is radical,  has a linear resolution, and its Betti numbers equals
those of $I_m(L)$.  
\end{theorem}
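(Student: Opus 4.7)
The plan is to apply Corollary~\ref{multigin} with a well-chosen radical Borel fixed ideal. The candidate is
$$
I \;=\; \bigl(x_{1,j_1}x_{2,j_2}\cdots x_{m,j_m} \;:\; j_1,\ldots,j_m\ges 1 \text{ and } j_1+\cdots+j_m\les n\bigr).
$$
One checks readily that $I$ is squarefree (hence radical) and closed under the row-Borel moves $x_{ij}\mapsto x_{ik}$ for $k<j$ (hence Borel fixed). Each of the $\binom{n}{m}$ listed monomials is minimal, and the minimal primes of $I$ are the $P_a=(x_{ij}:j\les a_i)$ with $a\in\NN^m$ and $\sum a_i=n-m+1$, so $\codim I=n-m+1$.

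The nontrivial step is to match Hilbert series. Since $I_m(L)$ attains the expected codimension, $S/I_m(L)$ is resolved by the Eagon--Northcott complex, whose multigraded terms (shifted by $(1,\ldots,1)+\alpha$ at the $j$-th step, $|\alpha|=j-1$) give
$$
\KK(S/I_m(L),y)=1-y_1\cdots y_m\sum_{k=0}^{n-m}(-1)^k\binom{n}{m+k}h_k(y_1,\ldots,y_m),
$$
with $h_k$ the complete homogeneous symmetric polynomial. On the other hand, $\HS(S/I,y)$ can be computed by direct enumeration: a monomial $\prod_i\mu_i\in S$, with $\mu_i\in K[x_{i1},\ldots,x_{in}]$, belongs to $I$ iff each $\mu_i\neq 1$ and $\sum_i\min\operatorname{supp}(\mu_i)\les n$. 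Summing the row contributions $y_i/(1-y_i)^{n-k_i+1}$ over $k_i=\min\operatorname{supp}(\mu_i)$ and rewriting yields
$$
\HS(S,y)-\HS(S/I,y)=\frac{y_1\cdots y_m}{\prod_i(1-y_i)^n}\sum_{j=0}^{n-m}h_j(1-y_1,\ldots,1-y_m).
$$
Thus the Hilbert-series equality reduces to the symmetric-function identity
$$
\sum_{j=0}^{n-m}h_j(1-y_1,\ldots,1-y_m)=\sum_{k=0}^{n-m}(-1)^k\binom{n}{m+k}h_k(y_1,\ldots,y_m),
$$
which I recognize as the formula attributed to Krattenthaler in the acknowledgments. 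I would verify it by extracting $[y^\beta]$ from both sides: on the left it becomes $(-1)^{|\beta|}\sum_{|\gamma|\les n-m-|\beta|}\prod_i\binom{\gamma_i+\beta_i}{\beta_i}$, which via the generating function $\prod_i(1-t)^{-(\beta_i+1)}=(1-t)^{-(|\beta|+m)}$ and one application of hockey-stick equals $(-1)^{|\beta|}\binom{n}{m+|\beta|}$, matching the right.

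Once the Hilbert series agree, Corollary~\ref{multigin} delivers everything. Applied to $J=I_m(L)$ it gives radicality of $I_m(L)$; applied to $J=\ini_\prec(I_m(L))$ for any term order $\prec$ (an ideal which shares the Hilbert series with $I_m(L)$, hence with $I$) it gives that every initial ideal is radical, has a linear resolution, and has the same multigraded Betti numbers as $I_m(L)$. Part (e) further forces $\beta_{1,a}(S/J)=0$ for $a\not\les(1,\ldots,1)$, and combined with $J_{<m}=0$ this pins each minimal generator of any initial ideal to multidegree exactly $(1,\ldots,1)$, hence to total degree $m$; finiteness of the Gröbner fan then produces the claimed universal Gröbner basis of degree-$m$ elements. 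The one delicate point is the symmetric-function identity above; the rest is essentially formal given the rigidity theorem of Section~\ref{radBor}.
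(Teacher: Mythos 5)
Your proof is correct and takes essentially the same route as the paper: the same candidate radical Borel fixed ideal $I=(x_{1j_1}\cdots x_{mj_m} : j_1+\cdots+j_m\le n)$, the same computation of $\KK(S/I_m(L),y)$ from the Eagon--Northcott complex, a direct Hilbert-series count for $I$ that lands on the same expression as the paper's filtration argument, reduction to the same symmetric-function identity \eqref{rg5}, and the same invocation of Corollary~\ref{multigin} to finish. The only substantive divergence is that you verify \eqref{rg5} by extracting $y^\beta$-coefficients and applying the hockey-stick identity, rather than via the paper's generating-function proof of the refinement \eqref{rg8}; both are valid, and your closing remark deducing degree-$m$ generation from Corollary~\ref{multigin}(e) together with $J_{<m}=0$ makes explicit a point the paper leaves implicit.
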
 

 
Set $$I=( x_{1j_1}\cdots x_{mj_m} : j_1+\dots+j_m \leq n).$$  
Theorem \ref{superBSZ3}  follows immediately from
Corollary~\ref{multigin} and from the following proposition, by
observing that $I$ is radical and Borel fixed. Notice that
Corollary~\ref{multigin} also implies that $I=\gin_\prec(I_m(L))$ for
every term order $\prec$.  
 
\begin{proposition}
\label{TWZ}
Under the assumptions of Theorem \ref{superBSZ3}, the  $\ZZ^m$-graded
Hilbert series of $I_m(L)$ equals that of $I$. 
\end{proposition}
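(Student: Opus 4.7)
The idea is to compute both multigraded Hilbert series in closed form and identify them.

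Since $I_m(L)$ has codimension $n-m+1$, the Eagon--Northcott complex is a minimal multigraded free resolution of $S/I_m(L)$. In the row-graded setup, $L$ is naturally a multigraded map $F=\bigoplus_{j=1}^n S \to G=\bigoplus_{i=1}^m S(e_i)$ (the $i$-th summand of $G$ with generator in multidegree $-e_i$), so the $k$-th term of the complex, for $k\geq 1$, is a sum of copies of $S$ with generators in multidegrees $(a_1+1,\dots,a_m+1)$, one for each $a\in\NN^m$ with $|a|=k-1$, each with multiplicity $\binom{n}{m+k-1}$. Taking the alternating sum of multigraded Hilbert series yields
$$\KK(S/I_m(L),y) \;=\; 1 - y_1\cdots y_m \sum_{k=0}^{n-m} (-1)^k \binom{n}{m+k}\, h_k(y_1,\dots,y_m),$$
where $h_k$ is the $k$-th complete homogeneous symmetric polynomial. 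Crucially, this expression depends only on $m$ and $n$, not on the entries of $L$.

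Next I would compute $\KK(I,y)$ directly. For a monomial $f\in S$, let $\mu_i(f)=\min\{j:x_{ij}\mid f\}$, with $\mu_i(f)=\infty$ when no row-$i$ variable divides $f$. Then $f\in I$ if and only if every $\mu_i(f)$ is finite and $\mu_1(f)+\cdots+\mu_m(f)\leq n$. Writing $S=T_1\otimes\cdots\otimes T_m$ with $T_i=K[x_{i,1},\dots,x_{i,n}]$ and decomposing each $T_i$ by the minimum index of its support, row $i$ with $\mu_i=m_i\in\{1,\dots,n\}$ contributes $y_i/(1-y_i)^{n-m_i+1}$ to the Hilbert series of $I$. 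Summing over the allowed tuples $(m_1,\dots,m_m)$ with $m_i\geq 1$ and $\sum m_i\leq n$, and then multiplying by $\prod_i (1-y_i)^n$, one obtains
$$\KK(I,y) \;=\; y_1\cdots y_m \sum_{\substack{k\in\NN^m \\ |k|\leq n-m}} \prod_{i=1}^m (1-y_i)^{k_i}.$$

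The proposition is thus equivalent to the polynomial identity
$$\sum_{k=0}^{n-m} (-1)^k \binom{n}{m+k}\, h_k(y) \;=\; \sum_{\substack{k\in\NN^m \\ |k|\leq n-m}} \prod_i (1-y_i)^{k_i},$$
which is the formula (rg8) of the paper. One verifies it by comparing the coefficient of $y^a$ on both sides for each $a\in\NN^m$: the left side gives $(-1)^{|a|}\binom{n}{m+|a|}$, while the right side, after expanding the products and reindexing $k_i=a_i+\ell_i$, becomes $(-1)^{|a|}\sum_{|\ell|\leq n-m-|a|}\prod_i\binom{\ell_i+a_i}{a_i}$; the latter equals $(-1)^{|a|}\binom{n}{m+|a|}$ by the standard hockey-stick identity applied to the generating function $\prod_i (1-t)^{-(a_i+1)}=(1-t)^{-(|a|+m)}$.

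The main obstacle is precisely recognising and proving (rg8): the two closed forms for $\KK(S/I_m(L),y)$ and $\KK(I,y)$ look structurally very different, and bridging them is the combinatorial heart of the argument --- this is where Krattenthaler's slick proof enters. A tempting shortcut --- realising $I$ directly as $I_m(M)$ for a concrete row-graded matrix $M$ and invoking Eagon--Northcott --- is unavailable, since $I$ is Borel fixed while natural candidates such as the staircase matrix $M_{ij}=x_{i,j-i+1}$ (zero outside the band $i\leq j\leq n-m+i$) are not.
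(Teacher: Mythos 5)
Your proposal is correct and follows the same overall strategy as the paper: compute $\KK(S/I_m(L),y)$ from the multigraded Eagon--Northcott complex (obtaining the paper's formula \eqref{rg0}), compute $\KK(S/I,y)$ in closed form (arriving at the paper's \eqref{rg3}/\eqref{rg4}), and then reduce the whole statement to the polynomial identity \eqref{rg5}. Where you diverge is in two technical points. First, to compute $\KK(I,y)$ the paper filters $I$ by a total refinement of the divisibility order and uses the colon-ideal computation \eqref{rg1}, whereas you partition the monomials of $I$ directly by their vector of minimal row-indices $(\mu_1,\dots,\mu_m)$, exploiting the tensor decomposition $S=T_1\otimes\cdots\otimes T_m$; both give \eqref{rg3}. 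Second, to prove \eqref{rg5} (equivalently \eqref{rg7}), the paper establishes the sharper degree-by-degree identity \eqref{rg8} by Krattenthaler's generating-function argument, whereas you compare the coefficient of each monomial $y^a$ directly and reduce it to a hockey-stick summation of $\prod_i\binom{a_i+\ell_i}{a_i}$ via the one-variable generating function $(1-t)^{-(|a|+m)}$ --- your computation is correct, though calling the target identity ``formula \eqref{rg8}'' is a slight mislabel, since \eqref{rg8} is the refined statement and what you actually prove is \eqref{rg5}. Your closing remark is also right: the staircase matrix does not have $I$ as its ideal of maximal minors (e.g.\ for $m=2$, $n=3$ one gets $(x_{11}x_{21},x_{11}x_{22},x_{12}x_{22})$, which is not Borel fixed and differs from $I$), so the shortcut of realizing $I$ itself as $I_m(M)$ for a row-graded $M$ and quoting Eagon--Northcott twice is indeed unavailable, and the combinatorial identity cannot be dodged.
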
 
 
\begin{proof}  The Hilbert series of $I_m(L)$ equals that of $I_m(X)$
  with $X=(x_{ij})$, because both ideals are resolved by the
  multigraded version of the  Eagon-Northcott complex. Hence we may
  assume without loss of generality that $L=X$.  We will show that
  $S/I_m(X)$ and $S/I$ have the same $\KK$-polynomial.  

Let  $\KK_{m,n}(y)$ be the $\KK$-polynomial of $S/I_m(X)$. By looking
at the diagonal initial ideal of $I_m(X)$ one obtains the recursion: 
$$\KK_{m,n}(y)=(1-y_m)\KK_{m,n-1}(y_1,\dots,y_m)+y_m\KK_{m-1,n-1}(y_1,\dots,y_{m-1}).$$
Solving the recursion or, alternatively,  by looking directly at the
multigraded version of the Eagon-Northcott complex,  one obtains:  
\begin{equation}
\label{rg0}
\KK_{m,n}(y)=1-(\prod_{i=1}^m y_i) \sum_{k=0}^{n-m} (-1)^k\binom{n}{m+k} h_k(y_1,\dots, y_m)
\end{equation} 
where $h_k(y_1,\dots,y_m)$ is the complete symmetric polynomial of
degree $k$,  i.e., the sum of all the monomials of degree $k$ in the
variables $y_1,\dots,y_m$.  

We now compute the $\KK$-polynomial of $S/I$. 
For $b\in [n]^m$ set  $x_b=x_{1b_1}x_{2b_2}\cdots x_{mb_m}$ so that 
$$I=( x_b : b\in \NN_{>0}^m \mbox{ and } |b|\leq n).$$ 
Extend the natural partial order,  i.e. $x_b\leq x_c$ if $b\leq c$ coefficientwise, 
 to a total order $<$ (no matter how). 
 For every $b\in [n]^m$ we have:
\begin{equation}
\label{rg1}
( x_c : x_c<x_b):x_b=(x_{ij} : i=1,\dots,m \mbox{ and } 1\leq j< b_i).
\end{equation} 

Filtering $I$ according to $<$ and using \eqref{rg1} one obtains: 
\begin{equation}
\label{rg2}
\KK(S/I,y)=1-y_1\dots y_m \sum_b \prod_{i=1}^m (1-y_i)^{b_i-1} 
\end{equation} 
where the sum $\sum_b$ is over all the $b\in \NN_{>0}^m$  and  $|b|\leq n$. 
Setting $c=b-(1,\dots,1)$ and replacing $b$ with $c$ in \eqref{rg2} we obtain:  
\begin{equation}
\label{rg3}
\KK(S/I,y)=1-y_1\dots y_m \sum_c \prod_{i=1}^m  (1-y_i)^{c_i}
\end{equation} 
where the sum $\sum_c$ is over all the $c\in \NN^m$  and  $|c|\leq n-m$. 
We may rewrite the last expression as: 
\begin{equation}
\label{rg4}
\KK(S/I,y)=1-y_1\dots y_m \sum_{k=0}^{n-m}  h_k(1-y_1,\dots, 1-y_m).
\end{equation} 

Taking into consideration \eqref{rg0} and \eqref{rg4}, it remains to prove that: 
\begin{equation}
\label{rg5}\sum_{k=0}^{n-m}  h_k(1-y_1,\dots, 1-y_m)= \sum_{k=0}^{n-m}
(-1)^k\binom{n}{m+k} h_k(y_1,\dots, y_m) 
\end{equation}  
 or equivalently, by replacing $y_i$ with $-y_i$ in \eqref{rg5}, it is left to show that: 
\begin{equation}
\label{rg6} 
\sum_{k=0}^{n-m}  h_k(1+y_1,\dots, 1+y_m)= \sum_{k=0}^{n-m} \binom{n}{m+k} h_k(y_1,\dots, y_m).
\end{equation}

 Setting $t=n-m$, \eqref{rg6}  is equivalent to the assertion that the equality:
\begin{equation}\label{rg7} 
\sum_{k=0}^{t}  h_k(1+y_1,\dots, 1+y_m)= \sum_{k=0}^{t} \binom{m+t}{m+k} h_k(y_1,\dots, y_m)
\end{equation}   
holds for every $m$ and $t$. 
The formula \eqref{rg7}  can be derived  from the more precise: 
\begin{equation}\label{rg8} 
h_t(1+y_1,\dots,1+y_m)=\sum_{k=0}^t \binom{m+t-1}{m+k-1} h_{k}(y_1,\dots,y_m).
\end{equation} 
 
Equation \eqref{rg8} can be proved by (long and tedious) induction  on
$m$.   The following simple argument using generating functions was
suggested by Christian Krattenthaler.  
First  notice that:
\begin{equation} 
\label{rg10}
\sum_{t\geq 0} h_t(y_1,\dots, y_m)z^t=\prod_{i=1}^m  \frac{1}{1-y_iz}\ .
\end{equation} 
Replacing  in \eqref{rg10} $y_i$ with $y_i+1$ and  observing that 
$$\prod_{i=1}^m  \frac{1}{1-(y_i+1)z}=\frac{1}{(1-z)^m} \prod_{i=1}^m
\frac{1}{1-y_i\frac{z}{(1-z)}}$$ 
we have:
\begin{equation} \label{rg11}\sum_{t\geq 0} h_t(1+y_1,\dots,
  1+y_m)z^t= \sum_{t\geq 0} h_t(y_1,\dots, y_m)
  \frac{z^t}{(1-z)^{t+m}}\ .\end{equation}  
Expanding the right-hand side of \eqref{rg11} one obtains  \eqref{rg8}. 
\end{proof}

 \end{document}